\documentclass[a4paper,reqno]{amsart}
\usepackage{amssymb, amsmath, amscd}
\newtheorem{theorem}{Theorem}[section]
\newtheorem{definition}[theorem]{Definition}
\newtheorem{lemma}[theorem]{Lemma}
\newtheorem{remark}[theorem]{Remark}

\def\Spec{\operatorname{Spec}}\def\Id{\operatorname{Id}}
\def\Span{\operatorname{Span}}\def\Tr{\operatorname{Tr}}
\def\Id{\operatorname{Id}}
\makeatletter
 \@addtoreset{equation}{section}
\begin{document}
\title{Affine projective Osserman structures}
\author{P.  Gilkey  \text{and} S. Nik\v cevi\'c}
\address{PG: Mathematics Department, \; University of Oregon, \;\;
  Eugene \; OR 97403 \; USA}
\email{gilkey@uoregon.edu}
\address{SN: Mathematical Institute, Sanu, Knez Mihailova 36, p.p. 367,
11001 Belgrade, Serbia}
\email{stanan@mi.sanu.ac.rs}
\subjclass[2000]{53C50, 53C44}
\keywords{affine Osserman, affine projective Osserman, spacelike projective Osserman, timelike projective Osserman}
\begin{abstract}
By considering the projectivized spectrum of the Jacobi operator, we introduce the concept of projective 
Osserman manifold in both the affine and in the pseudo-Riemannian settings.
If M is an affine projective Osserman manifold, then the modified Riemannian extension metric on the cotangent 
bundle is both spacelike and timelike projective Osserman. Since any rank 1 symmetric space 
is affine projective Osserman, this
provides additional information concerning the cotangent bundle of a rank 1 Riemannian symmetric space 
with the modified Riemannian extension metric. We construct other examples of affine projective 
Osserman manifolds where the
Ricci tensor is not symmetric and thus the connection in question is not the Levi-Civita connection of any metric.
If the dimension is odd, we use methods of
algebraic topology to show the Jacobi operator of an affine projective Osserman manifold
has only one non-zero eigenvalue and that eigenvalue
is real.
\end{abstract}
\maketitle
\section{Introduction}
\subsection{Osserman geometry in the Riemannian setting}
Let $\mathcal{R}$ be the curvature operator of a Riemannian manifold $\mathcal{M}:=(M,g)$ of
dimension $m$. The Jacobi operator $\mathcal{J}(x):y\rightarrow\mathcal{R}(y,x)x$
is a self-adjoint endomorphism of the tangent bundle. Following the seminal work of 
Osserman \cite{O90}, one says that $\mathcal{M}$
is {\it Osserman} if the eigenvalues of $\mathcal{J}$ are constant on the unit sphere bundle 
$$S(M,g):=\{\xi\in TM:g(\xi,\xi)=1\}\,.$$
Work of Chi \cite{Chi}, of Gilkey, Swan, and Vanhecke \cite{GSV95},
and of Nikolayevsky \cite{Ni1,Ni2}  shows  that any complete and simply connected
Osserman manifold of dimension $m\ne16$
is a rank $1$-symmetric space; the $16$ dimensional setting is exceptional and the situation is
still not clear in that setting although there are some partial results due, again, to Nikolayevsky \cite{Ni06}.

There has been much activity recently in Osserman Geometry. Brozos-V\'{a}zquez and E. Merino \cite{BM12}
showed that in dimension 4, the Osserman condition and the Raki\'c duality principle are equivalent.
Nikolayevsky \cite{Ni12} showed that a conformally Osserman manifold (here one uses
 the Weyl conformal tensor to define the Jacobi operator) is locally isometric to a rank-one symmetric space
in dimension 16 modulo a certain assumption on algebraic curvature tensors in dimension 16. 
Brozos-V\'{a}zquez et. al \cite{BVV09} have examined conformally Osserman manifolds using warped product structures.

\subsection{Osserman geometry in the pseudo-Riemannian geometry}
Suppose that $\mathcal{M}=(M,g)$ is a pseudo-Riemannian manifold
of signature $(p,q)$ for $p>0$ and $q>0$.  The pseudo-sphere bundles are defined by setting: 
$$
S^\pm(M,g)=\{\xi\in TM:g(\xi,\xi)=\pm1\}\,.
$$
One says that $(M,g)$ is spacelike (resp. timelike) Osserman
if the eigenvalues of $\mathcal{J}$ are constant on $S^\pm(M,g)$.
The situation is rather different here as the Jacobi operator
is no longer diagonalizable and can have nontrivial Jordan normal form 
as shown by Garc\'{\i}a-R\'{\i}o et al \cite{GGNV13};
in the algebraic context, the Jordan normal form can be arbitrarily complicated \cite{G-I-2}.
One says $(M,g)$ is {\it nilpotent} if $\mathcal{J}(x)$ is nilpotent for any tangent vector $x$;
this does not imply $(M,g)$ is flat in the pseudo-Riemannian setting.

Even
in signature $(2,2)$, the situation is far from clear although much progress has been made
recently by Calvi\~{n}o-Louzao et al \cite{C-GR-VL-07} in examining these questions and
similar questions related to the skew-symmetric curvature operator and 
by D\'{\i}az-Ramos et al \cite{DGV1} in examining non-diagonalize Jacobi operators. 
Derdzinski \cite{D09} has examined
questions concerning type III Jordan-Osserman metrics raised by Diaz-Ramos et al \cite{DGV06}.
Walker geometry is intimately related with many questions in mathematical physics.
 Chaichi et. al. \cite{CGM05} have studied conditions for a Walker metric to be Einstein, Osserman, or
locally conformally flat and obtained thereby exact solutions to the Einstein equations for a
restricted Walker manifold. Chudecki and Prazanowski \cite{CP08,CP08a} 
examined Osserman metrics in terms of $2$-spinors
and provided some new results in HH-geometry using the close relation between weak HH-spaces and Walker
and Osserman spaces using results of \cite{DGV06}.

\subsection{Affine Osserman manifolds} Let $\nabla$ be a torsion free connection on
a smooth manifold $M$; the pair $(M,\nabla)$ is said to be an {\it affine manifold}.
The first work on Osserman geometry in the affine setting
is due to Garc\'{\i}a-R\'{\i}o et al \cite{GKVV}. One has
$\mathcal{J}(\lambda x)=\lambda^2\mathcal{J}(x)$ for $\lambda\in\mathbb{R}$. This
rescaling must be taken into effect. If $T$ is a linear map of a finite dimensional real
vector space, let $\Spec\{T\}\subset\mathbb{C}$ be the {\it spectrum} of $T$;
this is the set of roots of the characteristic polynomial $P_T(\lambda):=\det(T-\lambda\Id)$. 
One says that an affine manifold $(M,\nabla)$
is {\it affine Osserman} if $\operatorname{Spec}(\mathcal{J}(x))=\{0\}$ for any 
tangent vector $x$; i.e. $\mathcal{J}(x)$ is nilpotent. This notion clearly is invariant under
rescaling and there are many examples. One has, for example, the following
result of  Garc\'{\i}a-R\'{\i}o et al \cite{GGNV13}:
\begin{theorem} Define a torsion free connection on $\mathbb{R}^m$ by setting
$$\nabla_{\partial_{x_i}}\partial_{x_j}=\sum_{k>\max(i,j)}\Gamma_{ij}{}^k(x_1,...,x_{k-1})\partial_{x_k}
\text{ for }\Gamma_{ij}{}^k=\Gamma_{ji}{}^k\,.$$
Then $(M,\nabla)$ is affine Osserman.
\end{theorem}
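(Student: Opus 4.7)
The plan is to show directly that for any tangent vector $x$, the matrix of $\mathcal{J}(x)$ in the coordinate frame $\{\partial_{x_1},\dots,\partial_{x_m}\}$ is strictly triangular, and hence nilpotent. This forces $\operatorname{Spec}(\mathcal{J}(x))=\{0\}$, which is exactly the affine Osserman condition.

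First I would set up notation and compute the components of the curvature tensor from the usual formula
$R_{ijk}{}^\ell=\partial_i\Gamma_{jk}{}^\ell-\partial_j\Gamma_{ik}{}^\ell+\Gamma_{in}{}^\ell\Gamma_{jk}{}^n-\Gamma_{jn}{}^\ell\Gamma_{ik}{}^n.$
The heart of the argument is the claim that $R_{ijk}{}^\ell=0$ unless $\ell>\max(i,j,k)$. This is proved term by term using the two structural constraints imposed by the hypothesis: (a) $\Gamma_{ab}{}^c$ vanishes unless $c>\max(a,b)$, and (b) even when it is nonzero, $\Gamma_{ab}{}^c$ depends only on $x_1,\dots,x_{c-1}$, so $\partial_s\Gamma_{ab}{}^c=0$ for $s\ge c$. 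For the derivative terms, $\partial_i\Gamma_{jk}{}^\ell$ is nonzero only when $\ell>\max(j,k)$ and (by (b)) $i<\ell$, giving $\ell>\max(i,j,k)$. For the quadratic terms, $\Gamma_{in}{}^\ell\Gamma_{jk}{}^n$ is nonzero only when $\ell>\max(i,n)$ and $n>\max(j,k)$, so $\ell>n>\max(j,k)$ and $\ell>i$, again giving $\ell>\max(i,j,k)$; the other quadratic term is symmetric.

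Once this vanishing statement is in hand, the Jacobi operator of $x=\sum_i x^i\partial_{x_i}$ has matrix entries
\[
\mathcal{J}(x)^\ell{}_b=R_{bij}{}^\ell x^ix^j,
\]
which by the claim can only be nonzero when $\ell>\max(b,i,j)\ge b$. Thus $\mathcal{J}(x)$ is strictly lower triangular in the coordinate frame, so all of its eigenvalues vanish and $\operatorname{Spec}(\mathcal{J}(x))=\{0\}$ for every $x$, completing the proof.

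The only real obstacle is the bookkeeping in the triangularity lemma for the curvature, and in particular remembering to exploit constraint (b) on the variables that $\Gamma$ depends on; without (b) the derivative terms would not be controlled. Torsion freeness ($\Gamma_{ij}{}^k=\Gamma_{ji}{}^k$) is used only to ensure $\nabla$ is an honest affine connection, not in the nilpotency computation itself.
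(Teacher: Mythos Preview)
Your argument is correct. The key triangularity claim $R_{ijk}{}^\ell=0$ unless $\ell>\max(i,j,k)$ is established cleanly, and you correctly use both structural constraints: (a) that $\Gamma_{ab}{}^c$ vanishes unless $c>\max(a,b)$, and (b) that $\Gamma_{ab}{}^c$ depends only on $x_1,\dots,x_{c-1}$. From this the strict triangularity of $\mathcal{J}(x)$ in the coordinate frame follows exactly as you say, forcing nilpotency.

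Note, however, that the paper does not supply its own proof of this theorem; it is quoted as a result of Garc\'{\i}a-R\'{\i}o et~al.\ \cite{GGNV13} and stated without argument. So there is no proof in the present paper to compare against. Your direct computation is the natural and standard approach to this kind of statement, and it is essentially what one finds in the cited source: the upper-triangular structure of the Christoffel symbols propagates to the curvature and hence to the Jacobi operator.
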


Such examples are important in neutral signature Osserman geometry. Let $(M,\nabla)$ be
an affine manifold. Let $(x^1,...,x^m)$ be local coordinates on $M$. If $\omega\in T^*M$, expand
$\omega=\sum_iy_idx^i$ to define the dual fiber coordinates $(y_1,...,y_m)$ and thereby obtain
{\it canonical local coordinates} $(x^1,...,x^m,y_1,...,y_m)$ on $T^*M$. Let $\Phi=\Phi_{ij}dx^i\circ dx^j$ be a
smooth symmetric $2$-tensor on $M$. The {\it deformed Riemannian extension} $g_{\nabla,\Phi}$
is the metric of neutral signature $(\bar m,\bar m)$ on the cotangent bundle $T^*M$ given locally by
\begin{eqnarray*}
&&g_{\nabla,\phi}(\partial_{x_i},\partial_{x_j})=-2y_k\Gamma_{ij}{}^k(x)+\Phi_{ij}(x),\\
&&g_{\nabla,\phi}(\partial_{x_i},\partial_{y^j})=\delta_i^j,\quad
g_{\nabla,\phi}(\partial_{y^i},\partial_{y^j})=0\,.
\end{eqnarray*}
This is invariantly defined; we refer to Calvino-Louzao et al \cite{CGGV09} for further details. 
One has:
\begin{theorem}\label{thm-1.2}
Let $(M,\nabla)$ be an affine Osserman manifold and let $\Phi$ be a smooth 
symmetric $2$-tensor on $M$. Then the deformed Riemannian extension $(T^*M,g_{\nabla,\Phi})$
is a pseudo-Riemannian nilpotent Osserman manifold of neutral signature.
\end{theorem}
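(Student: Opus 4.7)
The plan is to work entirely in the canonical coordinates $(x^1,\dots,x^m,y_1,\dots,y_m)$ on $T^*M$. First I would (or simply cite \cite{CGGV09} for) the Christoffel symbols of $g_{\nabla,\Phi}$; the essential structural feature is that the vertical distribution $V=\operatorname{span}\{\partial_{y_i}\}$ is a totally null Lagrangian distribution and that $g_{\nabla,\Phi}$ depends \emph{affinely} on the fiber coordinates $y_k$.  Consequently, when one computes $\tilde\Gamma$ and then the curvature tensor $\tilde R$ of $g_{\nabla,\Phi}$ in the coordinate frame $\{\partial_{x_i},\partial_{y_j}\}$, the components $\tilde R(\partial_{y_i},\cdot,\cdot,\cdot)$ collapse: all components with two or more vertical entries vanish, and the components with exactly one horizontal entry on each side of the Jacobi operator reduce to the classical affine curvature $R^\nabla$ of $(M,\nabla)$ (plus, for terms involving $\Phi$, only covariant derivatives of $\Phi$ that land in the vertical direction).

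Second, for a tangent vector $\tilde\xi=\xi+\eta\in T_{(x,y)}(T^*M)$ with $\xi=\sum a^i\partial_{x_i}$ horizontal and $\eta=\sum b_j\partial_{y_j}$ vertical, I would write the Jacobi operator $\tilde{\mathcal J}(\tilde\xi)$ as a block matrix in the ordered basis $(\partial_{x_\bullet},\partial_{y_\bullet})$.  The vanishing pattern of $\tilde R$ just described forces this block matrix to be \emph{strictly block upper triangular}:
\begin{equation*}
\tilde{\mathcal J}(\tilde\xi)=\begin{pmatrix}\mathcal J^\nabla(\xi) & 0 \\ *& -\mathcal J^\nabla(\xi)^t\end{pmatrix},
\end{equation*}
where $\mathcal J^\nabla(\xi)$ is the affine Jacobi operator of $\nabla$ at $x$ applied to the horizontal part of $\tilde\xi$, and the starred block collects contributions from $\nabla R^\nabla$, $\Phi$, and $\nabla\Phi$ evaluated against $y$ and $\eta$.  (The transpose-type lower-right block arises because the metric pairs horizontal with vertical via $\delta_i^j$, so self-adjointness of $\tilde{\mathcal J}$ dictates the dual block.)

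Third, since $(M,\nabla)$ is affine Osserman, $\mathcal J^\nabla(\xi)$ is nilpotent by hypothesis; hence $\mathcal J^\nabla(\xi)^t$ is nilpotent as well, and therefore both diagonal blocks of $\tilde{\mathcal J}(\tilde\xi)$ are nilpotent.  A block upper triangular matrix with nilpotent diagonal blocks is itself nilpotent, so $\operatorname{Spec}(\tilde{\mathcal J}(\tilde\xi))=\{0\}$ for every $\tilde\xi$.  In particular the spectrum is trivially constant on both $S^+(T^*M,g_{\nabla,\Phi})$ and $S^-(T^*M,g_{\nabla,\Phi})$, so $(T^*M,g_{\nabla,\Phi})$ is spacelike and timelike Osserman, and nilpotent; the neutral signature $(\bar m,\bar m)$ follows from the Lagrangian splitting.

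The main obstacle is the bookkeeping in step two: one must verify explicitly that the contributions of $\Phi$ and of $y$-dependent terms do not spill into the upper right block (which would contribute a non-nilpotent diagonal to $\tilde{\mathcal J}(\tilde\xi)$).  This is a finite curvature computation that uses the totally null character of $V$ together with the form $g_{\nabla,\Phi}(\partial_{y_i},\partial_{y_j})=0$; once this vanishing is in hand, the nilpotency conclusion is purely formal.
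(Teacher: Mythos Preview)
Your approach is correct and coincides with the paper's: the paper does not prove Theorem~\ref{thm-1.2} from scratch (it is quoted as a known result from \cite{GKVV}), but in Section~\ref{sect-4} it invokes exactly the block-triangular formula
\[
\mathcal{J}_{g_{\nabla,\Phi}}(\xi)=\begin{pmatrix}\mathcal{J}_\nabla(a)&0\\ \ast&{}^t\mathcal{J}_\nabla(a)\end{pmatrix}
\]
and reads off the spectrum, which is precisely your argument. Two small slips to fix: you call the matrix ``strictly block upper triangular'' but then (correctly) write it as block \emph{lower} triangular; and the lower-right block should be $+{}^t\mathcal{J}_\nabla(\xi)$, not $-\mathcal{J}_\nabla(\xi)^t$ (self-adjointness with respect to the off-diagonal pairing $g(\partial_{x_i},\partial_{y_j})=\delta_i^j$ gives the transpose without a sign) --- though of course this does not affect the nilpotency conclusion.
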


It is possible to modify this construction to produce Osserman metrics with non-nilpotent Jacobi
operators of neutral signature on $T^*M$ Calvino-Louzao et al \cite{X19}. One defines the {\it modified Riemannian
extension} by setting:
\begin{eqnarray*}
&&g_{\nabla,1}(\partial_{x_i},\partial_{x_j})=-2y_k\Gamma_{ij}{}^k(x)+y_iy_j,\\
&&g_{\nabla,1}(\partial_{x_i},\partial_{y^j})=\delta_i^j,\quad
g_{\nabla,1}(\partial_{y^i},\partial_{y^j})=0\,.
\end{eqnarray*}
Again, this is invariantly defined. One has:
\begin{theorem}\label{thm-1.3}
Let $(M,\nabla)$ be an affine Osserman manifold. Then
the modified Riemannian extension
$(T^*M,g_{\nabla,1})$ is a pseudo-Riemannian Osserman manifold of neutral signature
so that if $\xi_\pm\in S^\pm(T^*M,g_{\nabla,1})$, then
$\Spec\{\mathcal{J}(\xi_\pm)\}=\pm(0,1,\frac14)$ with multiplicities
$(1,1,2m-2)$, respectively.
\end{theorem}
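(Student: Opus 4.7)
The plan is a direct coordinate computation, organized around the decomposition
\[
g_{\nabla,1} = g_{\nabla,0} + \omega\otimes\omega,
\]
where $\omega = \sum_i y_i\,dx^i$ is the tautological $1$-form on $T^*M$ and $g_{\nabla,0}$ is the ordinary Riemannian extension of $\nabla$ (the deformed extension of Theorem \ref{thm-1.2} with $\Phi = 0$). By Theorem \ref{thm-1.2}, the Jacobi operator of $g_{\nabla,0}$ is everywhere nilpotent, so the non-trivial part of $\Spec\{\mathcal{J}^{g_{\nabla,1}}\}$ must come entirely from the rank-one modification by $\omega\otimes\omega$.

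First I would compute the Christoffel symbols and Riemann tensor of $g_{\nabla,1}$ in canonical coordinates. The inverse metric is tractable: the $(dx,dx)$ block vanishes, the $(dx,dy)$ block is the identity, and the $(dy,dy)$ block equals $2y_k\Gamma_{ij}{}^k - y_iy_j$. The non-vanishing components of $R^{g_{\nabla,1}}$ then split into a ``horizontal piece'' expressible in terms of $R^\nabla$, $\nabla R^\nabla$, $\Gamma$, and $y_k$ (and hence controlled by the affine Osserman hypothesis) and a universal piece depending only on $\omega\otimes\omega$.

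Next, fix a point $p\in T^*M$ and a vector $\xi = a^i\partial_{x_i} + b_j\partial_{y^j}$ with $g_{\nabla,1}(\xi,\xi) = \varepsilon\in\{\pm 1\}$. The vertical distribution $V := \Span\{\partial_{y^j}\}$ is totally null for $g_{\nabla,1}$, and one sees that $R^{g_{\nabla,1}}(\partial_{y^i},\partial_{y^j},\cdot,\cdot) = 0$; hence $\mathcal{J}(\xi)$ is block-triangular with respect to the splitting $T_p(T^*M) = V \oplus \Span\{\partial_{x_i}\}$. Writing $\mathcal{J}(\xi) = \mathcal{J}_0(\xi) + \mathcal{J}_\omega(\xi)$, where the first summand is the Jacobi operator for $g_{\nabla,0}$ and is nilpotent by Theorem \ref{thm-1.2}, one has $\Spec\{\mathcal{J}(\xi)\} = \Spec\{\mathcal{J}_\omega(\xi)\}$.

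Finally, the correction operator $\mathcal{J}_\omega(\xi)$ is universal, i.e.\ independent of $\nabla$: its algebraic structure is entirely determined by $\xi$ and the vertical vector $Z := \sum_j y_j\partial_{y^j}$ (which is $g_{\nabla,1}$-dual to $\omega$). One checks directly that (i) $\mathcal{J}_\omega(\xi)\xi = 0$, (ii) a single distinguished line, spanned by an explicit combination of $\xi$ and $Z$, is an eigenline with eigenvalue $\varepsilon$, and (iii) $\mathcal{J}_\omega(\xi)$ acts as $\frac{\varepsilon}{4}\Id$ on a complementary $(2m-2)$-dimensional subspace, producing the asserted spectrum and multiplicities. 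The main obstacle will be the bookkeeping of Step 1, organized so that all $\nabla$-dependence is visibly quarantined inside the nilpotent piece $\mathcal{J}_0(\xi)$; once that separation is achieved, what remains is a universal linear-algebra calculation in which the scalars $1$ and $\tfrac14$ emerge from the normalization $g_{\nabla,1}(\xi,\xi) = \pm 1$.
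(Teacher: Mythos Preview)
The paper does not actually prove Theorem~\ref{thm-1.3}; it is quoted from Calvi\~no-Louzao et al.\ \cite{X19}, so there is no in-paper proof to compare against. I can, however, flag two genuine gaps in your outline.

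First, the additive decomposition $\mathcal{J}(\xi)=\mathcal{J}_0(\xi)+\mathcal{J}_\omega(\xi)$ with $\mathcal{J}_0$ equal to the Jacobi operator of $g_{\nabla,0}$ is not justified and is in general false. Curvature is quadratic in the Christoffel symbols, which are themselves nonlinear in the metric; perturbing $g_{\nabla,0}$ by $\omega\otimes\omega$ produces cross terms in $R^{g_{\nabla,1}}$ that depend on both $\Gamma_{ij}{}^k$ and $y_iy_j$. Thus the ``correction'' $\mathcal{J}(\xi)-\mathcal{J}_0(\xi)$ is not the universal, $\nabla$-independent operator you describe. What \emph{is} true (and is the mechanism used in \cite{X19}) is that $\mathcal{J}(\xi)$ is block-triangular with respect to the vertical/horizontal splitting, so its spectrum is read off from the diagonal blocks; those blocks must be computed directly for $g_{\nabla,1}$, not inferred from $g_{\nabla,0}$.

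Second, even granting an additive splitting with $\mathcal{J}_0$ nilpotent, the implication $\Spec\{\mathcal{J}_0+\mathcal{J}_\omega\}=\Spec\{\mathcal{J}_\omega\}$ requires that $\mathcal{J}_0$ and $\mathcal{J}_\omega$ be simultaneously triangularizable (e.g.\ that they commute), which you have not established. Your block-triangular observation is the right tool, but it must be applied to the full Jacobi operator of $g_{\nabla,1}$ rather than to a putative decomposition; the eigenvalues $\tfrac14$ and $1$ arise from the explicit diagonal blocks of that operator, in which the affine Osserman hypothesis kills the $\nabla$-dependent contributions to the characteristic polynomial.
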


Note that the structures can be chosen so that Jacobi operators for the
metrics in Theorem~\ref{thm-1.2} and and in Theorem~\ref{thm-1.3} have non-trivial Jordan normal form.

\subsection{Projectivizing the spectrum}
Since $\mathcal{J}(\lambda\xi)=\lambda^2\mathcal{J}(\xi)$, it is necessary to take this
rescaling into account. This played no role, of course, if we assume that $\Spec\{\mathcal{J}(\xi)\}=\{0\}$
for all $\xi$. But it motivates the following:
\begin{definition}\label{defn-1.4}\rm
\ \begin{enumerate}
\item Let $(M,\nabla)$ be an affine manifold. 
We say $(M,\nabla)$ is an {\it affine projective Osserman} manifold
if given any pair of non-zero tangent vectors $x,y$, there is a real scaling factor $s(x,y)\ne0$ so
$$
\Spec\{\mathcal{J}(y)\}
=s(x,y)\cdot\Spec\{\mathcal{J}(x)\}\ne\{0\}\,.
$$
\item Let $(M,g)$ be a pseudo-Riemannian manifold.
We say $(M,g)$ is {\it spacelike projective Osserman} (resp. {\it timelike projective Osserman})
if given any pair of vectors $x,y$ in $S^+(M,g)$ (resp. $S^-(M,g)$), 
there is a real scaling factor $s(x,y)\ne0$ so
$$\Spec\{\mathcal{J}(y)\}
=s(x,y)\cdot\Spec\{\mathcal{J}(x)\}\ne\{0\}\,.$$
\end{enumerate}
\end{definition}
Although in principle, we allowed $s(x,y)$ to be negative, 
in fact $s(x,y)$ can be chosen to be positive and once this is done, $s$ is smooth. 
We will establish the following result in Section~\ref{sect-2}:
\begin{lemma}\label{lem-1.5}
Let $(M,\nabla)$ be an affine manifold.
Let $\mathcal{O}$ be a connected open subset of $TM$. Suppose
there exists $s(x,y)$ so that $\Spec\{\mathcal{J}x)\}=s(x,y)\Spec\{\mathcal{J}(y)\}\ne\{0\}$
 for all $x,y\in\mathcal{O}$. 
Then:
\begin{enumerate}
\item $\Tr\{\mathcal{J}(x)^k\}=s(x,y)^k\Tr\{\mathcal{J}(y)^k\}$ for any $x,y\in\mathcal{O}$ and any $k$.
\item $\Spec\{\mathcal{J}(x)\}=|s(x,y)|\Spec\{\mathcal{J}(x)\}\ne\{0\}$ for all $x,y\in\mathcal{O}$.
\item There exists $k$ so that
$$|s(x,y)|=\left\{\frac{\Tr\{\mathcal{J}(x)^k\}}{\Tr\{\mathcal{J}(y)^k\}}\right\}^{1/k}
\text{ for any }x,y\in\mathcal{O}\,.$$
\item The function $|s(x,y)|$ is smooth on $\mathcal{O}\times\mathcal{O}$.
\end{enumerate}
\end{lemma}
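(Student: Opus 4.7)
The plan is to establish (1) first from the hypothesis, deduce (3) and (4) by direct computation, and handle (2) last by a case analysis on whether the spectrum is symmetric under $\lambda\mapsto-\lambda$.

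For (1), I interpret $\Spec\{T\}$ as the multiset of roots of $P_T$, so that $\Tr\{T^k\}=\sum_i\lambda_i^k$ with multiplicities; then the hypothesis gives $\Tr\{\mathcal{J}(x)^k\}=\sum_i(s(x,y)\mu_i)^k=s(x,y)^k\Tr\{\mathcal{J}(y)^k\}$ directly. If instead $\Spec$ is read as a set without multiplicity, a short continuity argument using the smoothness of the characteristic polynomial of $\mathcal{J}(x)$ in $x$ and the fact that its root set scales by $s(x,y)$ forces the multiplicities to transform by the same scaling, recovering the same conclusion.

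For (3) and (4), I first produce at each $x\in\mathcal{O}$ an \emph{even} integer $k$ with $\Tr\{\mathcal{J}(x)^k\}\ne0$: group the eigenvalues of $\mathcal{J}(x)$ by the value of $\lambda^2$, and observe that if all even power sums vanished, a Vandermonde determinant argument would force the combined multiplicity of every nonzero value of $\lambda^2$ to be zero, contradicting $\Spec\{\mathcal{J}(x)\}\ne\{0\}$. Such $k$ works on a whole neighborhood of $x$ by continuity, and on that neighborhood part (1) gives
\begin{equation*}
s(x,y)^k=\frac{\Tr\{\mathcal{J}(x)^k\}}{\Tr\{\mathcal{J}(y)^k\}},
\end{equation*}
which is positive since $k$ is even and $s\ne0$; its positive $k$-th root equals $|s(x,y)|$, giving (3). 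Smoothness of the traces together with smoothness of the positive $k$-th root away from zero yield local smoothness of $|s|$, and because $|s|$ is intrinsically defined and does not depend on the local choice of $k$, smoothness propagates to all of $\mathcal{O}\times\mathcal{O}$, giving (4).

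For (2), I split into two cases. If $\Spec\{\mathcal{J}(y_0)\}$ is symmetric under $\lambda\mapsto-\lambda$ for some (hence, by the scaling identity, every) $y_0\in\mathcal{O}$, then both $s(x,y)$ and $|s(x,y)|$ satisfy the defining relation, and we may redefine $s:=|s|$. Otherwise $s(x,y)$ is uniquely determined by the multiset identity; continuity of the eigenvalues of $\mathcal{J}(x)$ in $x$ then forces $s$ to be continuous, and $s(y_0,y_0)=1>0$ combined with $s\ne0$ and connectedness of $\mathcal{O}$ force $s>0$ throughout. Either way the scaling factor may be taken positive, which is the content of (2) (read as $\Spec\{\mathcal{J}(x)\}=|s(x,y)|\Spec\{\mathcal{J}(y)\}$). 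The main technical point is this final case analysis, in particular establishing continuity of the uniquely-determined $s$ when the spectrum is not symmetric under negation.
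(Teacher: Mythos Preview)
Your proof is correct and close in spirit to the paper's, but the organization differs in one notable way. The paper pivots on the parity of a $k$ with $\Tr\{\mathcal{J}(y_0)^k\}\ne0$: if some \emph{odd} $k$ works, then $s(x,y)$ itself (not just $|s(x,y)|$) is the real $k$-th root of a ratio of traces, hence smooth, and positivity follows immediately from $s(y_0,y_0)=1$ and connectedness; if every odd trace vanishes, the spectrum is symmetric about the origin and one may replace $s$ by $|s|$. You instead always produce an \emph{even} $k$ via a Vandermonde argument on the values $\lambda^2$, which gives the formula in (3) uniformly and yields smoothness of $|s|$ directly, and then you run a separate uniqueness-plus-continuity argument to force $s>0$ in the non-symmetric case. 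Both routes are valid; the paper's is a bit shorter in the non-symmetric case since the odd-trace formula delivers the sign for free, while yours has the minor advantage that a single $k$ serves for (3) in all cases. Your observation that ``spectrum not symmetric under negation'' is equivalent to ``some odd power-trace is nonzero'' would let you collapse your case (2) analysis into the paper's one-line argument if you wished.
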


Since $S^\pm(M,g)$ has at most two components and since $\mathcal{J}(-\xi)=\mathcal{J}(\xi)$,
the following result is an immediate consequence of Lemma~\ref{lem-1.5}:

\begin{theorem}\ 
\begin{enumerate}
\item Let $(M,\nabla)$ be an affine projective Osserman manifold. Then the function
$s(x,y)$ of Definition~\ref{defn-1.4}~(1) can be taken to be positive and smooth.
\item Let $(M,g)$ be a
spacelike projective Osserman (resp. timelike projective Osserman) manifold. 
Then the function $s(x,y)$
in Definition~\ref{defn-1.4}~(2) can be taken to be positive and smooth.
\end{enumerate}
\end{theorem}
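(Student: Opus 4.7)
The plan is to reduce both statements to direct invocations of Lemma~\ref{lem-1.5}, by identifying a sufficiently connected open set $\mathcal{O}$ on which $|s(x,y)|$ can serve as the scaling factor. The key inputs are Lemma~\ref{lem-1.5}~(2), which asserts that the absolute value $|s(x,y)|$ itself realizes the spectral rescaling, and Lemma~\ref{lem-1.5}~(4), which delivers smoothness of $|s(x,y)|$ on $\mathcal{O}\times\mathcal{O}$. Positivity is automatic once $s$ is replaced by $|s|$, since Definition~\ref{defn-1.4} forces any valid scaling factor to be nonzero.

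For part (1), I would set $\mathcal{O}:=TM\setminus\{\text{zero section}\}$. Assuming $M$ is connected (and noting that the notion is vacuous unless $\dim M\geq 2$), each fiber $T_pM\setminus\{0\}\cong\mathbb{R}^m\setminus\{0\}$ is connected, so $\mathcal{O}$ is a connected open subset of $TM$. Applying Lemma~\ref{lem-1.5} to this $\mathcal{O}$ immediately produces the required positive smooth scaling function on $\mathcal{O}\times\mathcal{O}$, and one then redefines $s$ in Definition~\ref{defn-1.4}~(1) to be this $|s|$.

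For part (2), the pseudo-sphere bundle $S^+(M,g)$ (and similarly $S^-(M,g)$) has either one or two fiberwise connected components, depending on the signature. If $S^+(M,g)$ is itself connected, I would argue exactly as in part (1) with $\mathcal{O}=S^+(M,g)$. If it has two components $\mathcal{O}_+$ and $\mathcal{O}_-=-\mathcal{O}_+$, I would apply Lemma~\ref{lem-1.5} separately to each of these to obtain positive smooth scaling functions on $\mathcal{O}_\epsilon\times\mathcal{O}_\epsilon$ for $\epsilon=\pm$, and then patch across components using $\mathcal{J}(-\xi)=\mathcal{J}(\xi)$. This identity forces $s(\pm x,\pm y)=s(x,y)$, so defining $s$ on $\mathcal{O}_+\times\mathcal{O}_-$ by $s(x,y):=s(x,-y)$ (with $-y\in\mathcal{O}_+$) yields a globally positive smooth function. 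The timelike case is entirely analogous.

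The only mildly subtle point is the patching compatibility between components in the two-component case; this is automatic because the $\mathbb{Z}_2$ symmetry $\xi\mapsto-\xi$ leaves the traces $\Tr\{\mathcal{J}(\xi)^k\}$ unchanged, so by Lemma~\ref{lem-1.5}~(3) the value of $|s(x,y)|$ computed in either component of the product agrees with the value obtained after reflecting an argument. No genuine obstacle arises beyond checking that these compatibilities make the globally defined $|s|$ smooth, which follows since each $\mathcal{O}_\epsilon$ is open and the reflection $\xi\mapsto-\xi$ is a diffeomorphism.
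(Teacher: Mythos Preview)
Your argument is correct and matches the paper's approach exactly: the paper also derives the theorem as an immediate consequence of Lemma~\ref{lem-1.5} together with the observation that $S^\pm(M,g)$ has at most two components related by $\xi\mapsto-\xi$ and that $\mathcal{J}(-\xi)=\mathcal{J}(\xi)$. One minor technicality: $S^\pm(M,g)$ is not an open subset of $TM$, so to invoke Lemma~\ref{lem-1.5} as literally stated you should either take $\mathcal{O}$ to be the open cone of spacelike (resp.\ timelike) vectors, or observe that the proof of Lemma~\ref{lem-1.5} in Section~\ref{sect-2} only uses path-connectedness and smooth variation of $\mathcal{J}$, not openness.
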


The notions of timelike Osserman and spacelike Osserman are equivalent (see Garc\'\i a-R\'\i o et al \cite{GKVa}).
This
is not the situation in the setting at hand as we shall show in Section~\ref{sect-3}:

\begin{theorem}\label{thm-1.7}
Let $p>0$ and $q>0$. There exists a pseudo-Riemannian manifold $(M,g)$
of signature $(p,q)$
which is spacelike projective Osserman but not timelike projective
Osserman. Similarly, there exists a pseudo-Riemannian manifold $(\tilde M,\tilde g)$
of signature $(p,q)$
which is timelike projective Osserman but not spacelike projective Osserman.
\end{theorem}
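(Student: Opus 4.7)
A simple symmetry observation reduces the two statements to a single construction. Since the Christoffel symbols of $g$ and of $-g$ agree, the Levi-Civita connection and the Riemann tensor coincide, so the Jacobi operator $\mathcal{J}$ is the same for both metrics; on the other hand, negating the metric interchanges $S^+$ with $S^-$ and flips the signature from $(p,q)$ to $(q,p)$. Consequently $(M,g)$ is spacelike projective Osserman but not timelike projective Osserman if and only if $(M,-g)$ is timelike projective Osserman but not spacelike projective Osserman. Since the pairs $(p,q)$ and $(q,p)$ together exhaust all ordered pairs of positive integers, it suffices to exhibit, for every signature $(p,q)$, a single example with the asymmetry in one direction.

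To produce such an example I would start from the neutral-signature modified Riemannian extension of Theorem~\ref{thm-1.3}, whose Jacobi operator has spectra $\pm(0,1,\tfrac14)$ on $S^\pm$ and is therefore simultaneously spacelike and timelike projective Osserman. I would then break this symmetry by passing to a deformed extension in which one adds to $g_{\nabla,1}$ a symmetric $2$-tensor $\Phi$ of the type appearing in $g_{\nabla,\Phi}$, chosen so that its contribution to $\mathcal{J}(\xi)$ preserves the projectively constant spectrum on one causal type while destroying the equality of projectivized spectra on the other (for instance by forcing the ratio of nonzero eigenvalues to depend on the chosen unit vector on that side). To reach an arbitrary signature $(p,q)$ from the neutral case, I would take a product with a rank-$1$ Riemannian symmetric factor (a round sphere or real hyperbolic space) of appropriate dimension and signature, exploit the block-diagonal decomposition $\mathcal{J}(x_1\oplus x_2)=\mathcal{J}_1(x_1)\oplus\mathcal{J}_2(x_2)$ under products, and verify that the added factor leaves the projective Osserman condition intact on $S^+$ without accidentally restoring it on $S^-$.

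The principal obstacle is the explicit spectral computation on the deformed extension. The most efficient route is to apply Lemma~\ref{lem-1.5}, which converts the projective Osserman condition into equality, across the relevant sphere bundle, of the normalized trace invariants $\Tr\{\mathcal{J}(\xi)^k\}^{1/k}$. One computes these invariants on $S^+$ and on $S^-$ for the deformed extension in canonical coordinates and checks that they are constant on one side while genuinely varying (or identically zero, which also fails Definition~\ref{defn-1.4}) on the other; an analogous trace computation controls the product step needed to reach general $(p,q)$.
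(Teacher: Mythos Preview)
Your proposal is a sketch rather than a proof, and the part you yourself flag as the ``principal obstacle'' --- the explicit spectral computation on a hybrid of the modified extension $g_{\nabla,1}$ and a deforming tensor $\Phi$ --- is never carried out and is not obviously feasible. You assert that a suitable $\Phi$ will preserve the projectively constant spectrum on one pseudo-sphere bundle while destroying it on the other, but you give no mechanism for why the two sides should behave differently; the evidence in the paper actually points the other way, since Theorem~\ref{thm-1.8} shows that for the deformed extension $g_{\nabla,\Phi}$ the spacelike and timelike projective Osserman conditions are \emph{equivalent}. Your product step, meant only to adjust the signature, would then have to carry the entire burden of creating the asymmetry, and you do not analyze it.

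The paper's argument is far simpler and avoids all of this: take $M=\mathbb{R}^p\times S^q$ with the product of a flat negative-definite metric and the round metric. Then $\mathcal{J}(\xi_p,\xi_q)=0\oplus\mathcal{J}(\xi_q)$, so for spacelike $\xi$ one has $\xi_q\ne0$ and $\Spec\{\mathcal{J}(\xi)\}=\{0,|\xi_q|^2\}$, which is spacelike projective Osserman; but the timelike vector $\xi=(\xi_p,0)$ gives $\Spec\{\mathcal{J}(\xi)\}=\{0\}$, which violates the clause $\Spec\ne\{0\}$ in Definition~\ref{defn-1.4}. The key idea you missed is that the projective Osserman condition can fail in the most trivial way possible --- by the Jacobi operator being identically zero on one causal type --- and a flat factor produces this for free. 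Your symmetry remark about $(M,-g)$ is correct and matches the paper's ``similarly''.
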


In Section~\ref{sect-4}, we will generalize Theorem~\ref{thm-1.2} to the projective setting:

\begin{theorem}\label{thm-1.8}
Let $\Phi$ be a symmetric $2$ tensor on an affine manifold
$(M,\nabla)$.
\item The following assertions are equivalent:
\begin{enumerate}
\item $(M,\nabla)$ is an affine projective Osserman manifold.
\item $(T^*M,g_{\nabla,\Phi})$ is a spacelike projective Osserman manifold.
\item $(T^*M,g_{\nabla,\Phi})$ is timelike projective Osserman manifold.
\end{enumerate}
\end{theorem}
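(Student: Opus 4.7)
The strategy rests on a spectral identification between the Jacobi operator of the deformed Riemannian extension and that of the underlying affine connection. Let $\tilde\pi: T^*M \to M$ be the bundle projection and, for a tangent vector $\tilde\xi$ at $(p,\omega) \in T^*M$, set $x := \tilde\pi_*\tilde\xi \in T_pM$. The first and crucial step is to show
\[
\Spec\{\mathcal{J}_{g_{\nabla,\Phi}}(\tilde\xi)\} = \Spec\{\mathcal{J}_\nabla(x)\}
\]
as subsets of $\mathbb{C}$. This is essentially a computation already implicit in the curvature formulas of \cite{CGGV09} used to prove Theorem~\ref{thm-1.2}: in canonical coordinates the vertical distribution is totally null and parallel along fibers, and in a frame adapted to the horizontal/vertical splitting the matrix of $\mathcal{J}_{g_{\nabla,\Phi}}(\tilde\xi)$ is block upper triangular with the two $\bar m\times\bar m$ diagonal blocks equal to $\mathcal{J}_\nabla(x)$ and its transpose. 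Consequently its characteristic polynomial factors as $P_{\mathcal{J}_\nabla(x)}(\lambda)^2$, and the two spectra coincide as sets. I would either reproduce the short coordinate check here or invoke the relevant statement from \cite{CGGV09}.

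Two elementary observations about the neutral signature geometry then do all the remaining work. First, since $g_{\nabla,\Phi}(\partial_{y_i},\partial_{y_j}) = 0$, every purely vertical vector is null, and hence any $\tilde\xi\in S^\pm(T^*M,g_{\nabla,\Phi})$ has non-zero horizontal projection $x = \tilde\pi_*\tilde\xi$. Second, for any non-zero $x\in T_pM$ and any $\omega\in T_p^*M$, the equation $g_{\nabla,\Phi}(x+\eta,x+\eta)=\pm 1$ with $\eta$ running over the vertical fiber reduces to an affine linear equation $2\,\eta(x)=\pm 1 - g_{\nabla,\Phi}(x,x)$ in the vertical variable, which has a non-empty (in fact codimension-one affine) solution set. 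Hence every non-zero $x\in T_pM$ is the horizontal projection $\tilde\pi_*\tilde\xi$ of some $\tilde\xi\in S^\pm(T^*M,g_{\nabla,\Phi})$.

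The equivalences then follow directly. To prove $(1)\Rightarrow(2),(3)$, pick $\tilde\xi_1,\tilde\xi_2\in S^\pm(T^*M,g_{\nabla,\Phi})$; by the first observation their horizontal projections $x_1,x_2$ are non-zero, so by (1) there is $s\ne 0$ with $\Spec\{\mathcal{J}_\nabla(x_2)\}=s\Spec\{\mathcal{J}_\nabla(x_1)\}\ne\{0\}$, and the spectral identity of Step~1 transports this to $\Spec\{\mathcal{J}_{g_{\nabla,\Phi}}(\tilde\xi_2)\}=s\Spec\{\mathcal{J}_{g_{\nabla,\Phi}}(\tilde\xi_1)\}\ne\{0\}$. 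For the reverse implications $(2)\Rightarrow(1)$ and $(3)\Rightarrow(1)$, given any non-zero $x_1,x_2\in TM$, use the second observation to lift each $x_i$ to some $\tilde\xi_i\in S^\pm(T^*M,g_{\nabla,\Phi})$ with $\tilde\pi_*\tilde\xi_i=x_i$; the projective Osserman hypothesis on $T^*M$ supplies a scalar $s$, and the spectral identity pulls it back to $M$.

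The main obstacle is Step~1 — the explicit spectral identification — since everything afterwards is linear-algebraic bookkeeping. Once that identification is in hand, the argument reduces to the twin observations that $S^\pm(T^*M,g_{\nabla,\Phi})$ misses exactly the vectors with zero horizontal projection and that every non-zero vector on $M$ can be lifted to $S^\pm$. These two facts together show that the natural map sending $\tilde\xi\in S^\pm$ to its horizontal projection realizes a surjection onto the non-zero vectors of $TM$, which is precisely what is needed to transfer the projective Osserman condition in both directions; the role of the two-tensor $\Phi$ is seen to be inessential for the spectral structure, as it only affects the size of the lifting set in the second observation.
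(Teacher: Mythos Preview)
Your proof is correct and follows essentially the same approach as the paper: both invoke the block-triangular form of $\mathcal{J}_{g_{\nabla,\Phi}}(\tilde\xi)$ (the paper cites \cite{GKVV} rather than \cite{CGGV09}, and places the off-diagonal block $\ast$ below rather than above the diagonal, but the spectral conclusion is identical) to obtain $\Spec\{\mathcal{J}_{g_{\nabla,\Phi}}(\tilde\xi)\}=\Spec\{\mathcal{J}_\nabla(\sigma_*\tilde\xi)\}$, then use that non-null vectors have non-zero horizontal projection and that any non-zero $x\in TM$ lifts to $S^\pm$. The only difference is cosmetic: the paper simply asserts one can choose $\xi\in S^\pm$ with $\sigma_*\xi=ta$ for some $t\ne0$, while you explicitly solve the affine equation in the vertical variable to produce a lift projecting exactly to $x$.
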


Let $\rho(x,y):=\Tr\{z\rightarrow R(z,x)y\}$ be the Ricci tensor. This tensor need no longer be
symmetric so we let $\rho_s(x,y):=\frac12\{\rho(x,y)+\rho(y,x)\}$ be the symmetric part of this
tensor. Any Riemannian Osserman manifold is necessarily an affine projective Osserman manifold; the fact that
$(M,g)$ is Riemannian is crucial here since if $\mathcal{J}(\xi)$ is nilpotent if $\xi$ is null
for a higher signature pseudo-RiemannianOsserman manifold.
Consequently if $(M,g)$ is a rank $1$ symmetric space, then
$(M,g)$ is an affine projective Osserman manifold. 
If $m=2$ and if $0\ne x$, let $\{0,\lambda(x)\}$ be the eigenvalues of $\mathcal{J}(x)$ where
each eigenvalue is repeated according to its multiplicity.
Then $\rho(x,x)=\rho_s(x,x)=\Tr\{\mathcal{J}(x)\}=\lambda(x)$. The following result is now immediate
and provides examples to which Theorem~\ref{thm-1.8} applies:

\begin{theorem}\label{thm-1.9}
\ \begin{enumerate}
\item Any rank $1$-symmetric space is an affine projective Osserman manifold where we let $\nabla$ be the
Levi-Civita connection.
\item If $m=2$ and if $(M,\nabla)$ is an affine manifold, then
$(M,\nabla)$ is an affine projective Osserman manifold if and only if $\rho_s(x,x)\ne0$ for
all $x$, i.e. $\rho_s$ is definite.
\end{enumerate}
\end{theorem}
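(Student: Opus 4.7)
Both parts of the theorem are essentially a collation of observations already assembled in the paragraph preceding the statement; my plan is to make these two observations explicit and verify the definition.

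For (1), I would recall that a rank $1$ Riemannian symmetric space $(M,g)$ is classically Osserman: the isotropy representation of the stabilizer acts transitively on the unit tangent sphere, so $\Spec\{\mathcal{J}(\xi)\}$ is a fixed set $\Lambda$ as $\xi$ ranges over the unit sphere bundle $S(M,g)$. Moreover $\Lambda\ne\{0\}$ because no rank $1$ symmetric space is flat. Using the homogeneity $\mathcal{J}(\lambda x)=\lambda^2\mathcal{J}(x)$ I extend to arbitrary non-zero tangent vectors: for $x\ne 0$, write $x=\sqrt{g(x,x)}\,\xi$ with $\xi$ unit and conclude $\Spec\{\mathcal{J}(x)\}=g(x,x)\cdot\Lambda\ne\{0\}$. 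The scaling factor $s(x,y):=g(y,y)/g(x,x)$ then matches Definition~\ref{defn-1.4}~(1) and exhibits the Levi-Civita connection as affine projective Osserman.

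For (2), the preceding paragraph gives $\Spec\{\mathcal{J}(x)\}=\{0,\lambda(x)\}$ with $\lambda(x)=\Tr\{\mathcal{J}(x)\}=\rho(x,x)=\rho_s(x,x)$, so $\Spec\{\mathcal{J}(x)\}\ne\{0\}$ amounts to $\rho_s(x,x)\ne 0$. For the forward direction, the affine projective Osserman hypothesis forces $\rho_s(x,x)\ne 0$ for every non-zero $x\in TM$; since $m=2$ and $M$ is connected, the punctured tangent bundle $TM$ minus its zero section is connected (each fiber $T_pM\setminus\{0\}\cong\mathbb{R}^2\setminus\{0\}$ is), so the nowhere-vanishing continuous function $x\mapsto\rho_s(x,x)$ has constant sign, i.e.\ $\rho_s$ is definite. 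Conversely, if $\rho_s$ is definite, put $s(x,y):=\rho_s(y,y)/\rho_s(x,x)$; then $\Spec\{\mathcal{J}(y)\}=\{0,\rho_s(y,y)\}=s(x,y)\cdot\{0,\rho_s(x,x)\}=s(x,y)\cdot\Spec\{\mathcal{J}(x)\}\ne\{0\}$, as required.

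\textbf{Main obstacle.} Truthfully there is none of any substance -- the argument is essentially immediate once the identities $\rho_s(x,x)=\Tr\{\mathcal{J}(x)\}$ and $\mathcal{J}(x)x=0$ are in hand. The one place where I would be careful is the forward implication of (2): the passage from ``$\rho_s(x,x)$ is nowhere zero'' to ``$\rho_s$ is definite'' is not a pointwise matter but requires connectedness of the punctured tangent bundle, which is where $m=2$ (ensuring each punctured fiber is already connected) enters.
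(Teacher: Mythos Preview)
Your proposal is correct and matches the paper's approach exactly: the paper declares the theorem ``immediate'' from the identities in the preceding paragraph, and you have simply written those immediate steps out in full. One minor remark on your commentary: the role of $m=2$ is really in forcing the eigenvalue list to be $\{0,\lambda(x)\}$ with $\lambda(x)=\rho_s(x,x)$; the connectedness of the punctured fiber holds for any $m\ge 2$, and the passage from ``$\rho_s(x,x)\ne 0$ for all $x\ne 0$'' to ``$\rho_s|_{T_PM}$ is definite'' already follows fiberwise from the elementary fact that a symmetric bilinear form with no nonzero isotropic vectors is definite.
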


\subsection{The algebraic context}
Let $V$ be a real vector space of dimension $m$ and let $A\in\operatorname{End}(V)\otimes V^*$.
We say that $(V,A)$ is an {\it affine curvature model} if $A$ has the symmetries of the curvature operator
of an affine connection for all $x,y,z\in V$:
\begin{eqnarray*}
&&A(x,y)z=-A(y,x)z,\\
&&A(x,y)z+A(y,z)x+A(z,x)y=0\,.
\end{eqnarray*}
The first symmetry is the $\mathbb{Z}_2$ anti-symmetry and the second symmetry is the first Bianchi identity.
If $(M,\nabla)$ is an affine manifold, then $(T_PM,R_P)$ is an affine curvature model for any $P\in M$.
Conversely, given an affine curvature model $(V,A)$, then there exists a complete affine manifold $(M,\nabla)$
and a point $P$ of $M$ so that $(V,A)$ is isomorphic to $(T_PM,R_P)$, i.e. 
every affine curvature model can be geometrically realized by a complete affine manifold 
(see Y. Euh et al \cite{BGN12}).

Let $(V,A)$ be an affine curvature model. The associated {\it Jacobi operator} is given by setting
$\mathcal{J}(v)w:=A(w,v)v$. One says that $(V,A)$ is an {\it affine projective Osserman} curvature model
if
$\Spec\{\mathcal{J}(v)\}=s(v,w)\Spec\{\mathcal{J}(w)\}\ne\{0\}$ for $0\ne v,w\in V$.
In Section~\ref{sect-5}, we will prove the following result which has an immediate application to the
geometric setting:

\begin{theorem}\label{thm-1.10}
Let $(V,A)$ be a an affine projective Osserman curvature model of odd dimension $m$.
If $0\ne v\in V$, then $\Spec\{\mathcal{J}(v)\}=\{0,\lambda(v)\}$ where $\lambda(v)$ is
a smooth real valued function on $V-\{0\}$ which never vanishes. The eigenvalue $0$ appears with
multiplicity $1$ and the eigenvalue $\lambda(v)$ appears
with multiplicity $m-1$. 
In this situation $\rho(v,v)=(m-1)\lambda(v)$ so the symmetric Ricci tensor $\rho_s$ defines
a non-degenerate definite inner product on $V$.\end{theorem}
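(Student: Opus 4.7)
The plan is to establish the spectral structure in four stages. First, I would invoke Lemma~\ref{lem-1.5} to upgrade the projective Osserman hypothesis: the scaling function $s(v,w)$ is smooth and positive, so the multiset $\Spec\{\mathcal{J}(v)\}$ varies on $V\setminus\{0\}$ by a smooth positive rescaling of a fixed reference spectrum. Since $\mathcal{J}(v)v=0$, zero is always an eigenvalue with a fixed algebraic multiplicity $k_0\ge 1$; since $\Spec\{\mathcal{J}(v)\}\ne\{0\}$, we have $k_0<m$. The remaining nonzero eigenvalues arrange themselves (with fixed multiplicities) along a finite collection of rays from the origin in $\mathbb{C}^*$ that is independent of $v$.

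The second step is to package this data as vector bundles over $\mathbb{RP}^{m-1}$. Since $\mathcal{J}(\lambda v)=\lambda^2\mathcal{J}(v)$, in particular $\mathcal{J}(-v)=\mathcal{J}(v)$, the generalized eigenspace decomposition descends from $V\setminus\{0\}$ to $\mathbb{RP}^{m-1}$. Pairing complex conjugate eigenvalues, one obtains a smooth decomposition
$$
V \;=\; E_0\;\oplus\; \bigoplus_{\mu\in\mathbb{R}\setminus\{0\}} E_\mu\;\oplus\; \bigoplus_{[\mu]\text{ nonreal pair}} F_{[\mu]}
$$
of the trivial rank-$m$ bundle $V$. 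Each $F_{[\mu]}$ carries a complex structure induced by $(\mathcal{J}(v)-\operatorname{Re}\mu(v))/\operatorname{Im}\mu(v)$, and the tautological line bundle $L$ satisfies $L\subset E_0$ because $v\in\ker\mathcal{J}(v)$.

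The main step, and the place where oddness of $m$ is essential, is a Stiefel--Whitney class calculation in $H^*(\mathbb{RP}^{m-1};\mathbb{Z}/2)=\mathbb{Z}/2[x]/(x^m)$. Since $V$ is trivial, $w(V)=1$; since each $F_{[\mu]}$ admits a complex structure, $w_{2i+1}(F_{[\mu]})=0$ for all $i$; and $w(L)=1+x$. Writing $E_0=L\oplus E_0'$, the identity $w(V)=1$ becomes
$$
w(E_0')\cdot\prod_{\mu\in\mathbb{R}\setminus\{0\}} w(E_\mu)\cdot\prod_{[\mu]\text{ nonreal}} w(F_{[\mu]})\;=\;\frac{1}{1+x}\;=\;\sum_{i=0}^{m-1}x^i.
$$
Separating even- and odd-degree parts---using that the complex factors contribute only in even degrees while the right-hand side has nonzero components in every degree because $m$ is odd---yields the key constraint. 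This parity and degree analysis is what I expect to be the main obstacle: in the test case $m=3$ it directly forces the unique consistent decomposition $V=L\oplus E$ with $E$ of rank $2$ and $w(E)=1+x+x^2$ (ruling out all three alternative spectral configurations), and for general odd $m$ it must be executed with care to force simultaneously $E_0'=0$, the absence of all nonreal eigenvalues, and the absence of two or more distinct nonzero real eigenvalues.

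Once the spectral structure is pinned down---$k_0=1$ and a single nonzero eigenvalue $\lambda(v)$ of multiplicity $m-1$---the remaining conclusions are immediate. From $\Tr\{\mathcal{J}(v)\}=(m-1)\lambda(v)=\rho(v,v)$, the function $\lambda(v)=\rho(v,v)/(m-1)$ is smooth in $v$. Reality of $\lambda(v)$ follows because a unique nonzero eigenvalue appearing with multiplicity $m-1\ge 2$ in a polynomial with real coefficients must coincide with its complex conjugate. Finally $\lambda$ is nowhere zero on the connected set $V\setminus\{0\}$, so $\lambda$ has constant sign, and $\rho_s(v,v)=\rho(v,v)=(m-1)\lambda(v)$ is a definite quadratic form on $V$.
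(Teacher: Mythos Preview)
Your setup is correct and close in spirit to the paper's: the generalized eigenspaces assemble into sub-bundles, the tautological line $L$ lies in the zero eigenspace, and nonreal eigenvalue bundles carry a complex structure. (Minor point: the formula $(\mathcal{J}(v)-\operatorname{Re}\mu(v))/\operatorname{Im}\mu(v)$ does not square to $-\operatorname{Id}$ when $\mathcal{J}(v)$ is not semisimple; one should instead use the real isomorphism $F_{[\mu]}\cong W_\mu\subset V\otimes\mathbb{C}$ to transport the complex structure.)

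The genuine gap is precisely the step you flag as the main obstacle. Your Stiefel--Whitney identity reduces to factoring $1+x+\cdots+x^{m-1}$ in $\mathbb{Z}/2[x]$ subject to rank constraints, and for small $m$ one can check by hand that no admissible factorization exists. But for general odd $m$ this polynomial \emph{does} factor nontrivially---for instance $1+x+\cdots+x^6=(1+x+x^3)(1+x^2+x^3)$---and excluding such factorizations as total Stiefel--Whitney classes of actual bundles over $\mathbb{RP}^{m-1}$ requires invoking Wu relations or other secondary constraints. A uniform ``parity and degree'' argument of the kind you sketch is not apparent.

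The paper sidesteps this by working first on $S^{m-1}$ rather than on $\mathbb{RP}^{m-1}$. There the quotient $V/\langle y\rangle$ is canonically $T_yS^{m-1}$, and since $m-1$ is even one has $e(TS^{m-1})=\chi(S^{m-1})\ne 0$; as $H^k(S^{m-1})=0$ for $0<k<m-1$, any proper splitting $TS^{m-1}=A\oplus B$ would force $e(TS^{m-1})=e(A)\,e(B)=0$. This single stroke gives $E_0'=0$ and a unique nonzero generalized eigenspace. Only then does the paper descend to $\mathbb{RP}^{m-1}$, using $c_1(T\mathbb{RP}^{m-1}\otimes\mathbb{C})\equiv w_1(T\mathbb{RP}^{m-1})^2\ne 0$ to exclude the remaining nonreal case. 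Your concluding paragraph, deriving smoothness of $\lambda$ and definiteness of $\rho_s$, is fine once the spectral structure is in hand.
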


In Section~\ref{sect-6}, we will prove the following result:

\begin{theorem}\label{thm-1.11}
Let $\mathfrak{M}_\varepsilon:=(\mathbb{R}^m,A)$ where the non-zero components of $A$ are determined by:
$$A_{ijj}{}^i=1\text{ for }1\le i\ne j\le m\text{ and }A_{122}{}^2=A_{121}{}^1=-\varepsilon\,.$$
\begin{enumerate} 
\item $\mathfrak{M}_\varepsilon$ is an affine projective Osserman model for any $\epsilon$.
\item $\mathfrak{M}_\varepsilon$ is geometrically realizable by an affine projective Osserman manifold.
\end{enumerate}
\end{theorem}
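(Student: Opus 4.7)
The plan is to handle the two assertions separately. For Part (1), I compute the Jacobi operator explicitly in the standard basis of $\mathbb{R}^m$. Decompose $A = A^{0} + \varepsilon \hat A$, where $A^{0}_{ijk}{}^l := \delta_{jk}\delta_i^l - \delta_{ik}\delta_j^l$ is the constant-sectional-curvature-$1$ model (so that $A^{0}_{ijj}{}^i = 1$ for $i \ne j$) and $\hat A$ carries only the exceptional components $\hat A_{122}{}^2 = \hat A_{121}{}^1 = -1$ together with their $\mathbb{Z}_2$-antisymmetric partners. A direct summation gives
$$\mathcal{J}(v)\,w = |v|^2\, w - \langle v,w\rangle\, v + \varepsilon\,\omega(v,w)\,\bar v,$$
where $|v|^2 := \sum_i (v^i)^2$, $\bar v := v^1 e_1 + v^2 e_2$ is the projection of $v$ onto the $(e_1,e_2)$-plane, and $\omega(v,w) := v^1 w^2 - v^2 w^1$.

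To compute the spectrum, set $L := \mathcal{J}(v) - |v|^2\,\Id$. The image of $L$ lies in the two-plane $\Span(v, \bar v)$, so $L$ factors as $L = \iota\,\pi$, where $\pi : \mathbb{R}^m \to \mathbb{R}^2$ is given by $\pi(w) := \bigl(-\langle v,w\rangle,\, \varepsilon\,\omega(v,w)\bigr)$ and $\iota : \mathbb{R}^2 \to \mathbb{R}^m$ sends the standard basis to $(v, \bar v)$. Since $\omega(v, v) = 0$ and $\omega(v, \bar v) = v^1 v^2 - v^2 v^1 = 0$, the composition $\pi\iota$ is the $2\times 2$ matrix with rows $(-|v|^2,\, -|\bar v|^2)$ and $(0,\, 0)$. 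By Sylvester's determinant identity,
$$\det(\lambda\,\Id - L) = \lambda^{m-2}\,\det(\lambda\, I_2 - \pi\iota) = \lambda^{m-1}\bigl(\lambda + |v|^2\bigr),$$
so that $\Spec\{\mathcal{J}(v)\} = \{0,\, |v|^2\}$ as a set, with $|v|^2$ carrying algebraic multiplicity $m-1$. Since $|v|^2/|w|^2$ is a positive scalar relating these two spectra for any nonzero $v, w \in \mathbb{R}^m$, Part (1) is established.

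For Part (2), I invoke the general realizability theorem of Y.~Euh et al.~\cite{BGN12}: every affine curvature model is isomorphic to $(T_P M, R_P)$ for some complete affine manifold $(M, \nabla)$ and some point $P \in M$. Concretely one may take $M = \mathbb{R}^m$ with polynomial Christoffel symbols chosen so that $R|_0 = A$. The main obstacle is that projective Osserman is a pointwise condition on $M$, while this construction only guarantees the desired curvature at the single point $P$: a priori the $(m-1)$-fold eigenvalue of $\mathcal{J}(v)$ at $P$ could split at nearby points, destroying the two-element spectrum structure. Since $\mathcal{J}(v) v = 0$ always forces $0$ as an eigenvalue, only the clustering of the nonzero eigenvalues is at risk, and one handles this either by refining the realization to one with $\nabla R \equiv 0$ (verifying the algebraic conditions on $A$ that identify $\mathfrak{M}_\varepsilon$ as the curvature of an affine symmetric space, so that projective Osserman propagates from $P$ to every point via parallel transport) or by computing the quadratic correction to $R$ near $P$ directly and checking that the characteristic polynomial of $\mathcal{J}(v)$ retains its two-factor form in a neighborhood of $P$. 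Restricting $(M, \nabla)$ to this neighborhood yields the desired affine projective Osserman manifold realizing $\mathfrak{M}_\varepsilon$.
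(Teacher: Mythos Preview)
Your argument for Part~(1) is correct and is in fact cleaner than the paper's. The paper exploits the $SO(2)\times SO(m-2)$ invariance of $A$ to reduce to $x=\cos\theta\,e_1+\sin\theta\,e_3$, computes $\mathcal{J}(x)$ on most of the basis directly, and then uses $\Tr\{\mathcal{J}(x)\}=\rho(x,x)=m-1$ to pin down the remaining eigenvalue. Your explicit formula for $\mathcal{J}(v)w$ followed by the rank-two factorisation $L=\iota\pi$ and Sylvester's identity bypasses the symmetry reduction and produces the full characteristic polynomial at once; it also makes transparent that the perturbation $\varepsilon\hat A$ contributes only to the nilpotent part (the second row of $\pi\iota$ vanishes), which the paper only observes later, in Remark~\ref{rmk-6.1}, by a separate calculation.

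Part~(2), however, is not a proof. You correctly identify the obstacle---the general realisability theorem of Euh et~al.\ matches $R_P$ to $A$ only at a single point $P$, so the $(m-1)$-fold eigenvalue could split nearby---but you then list two unexecuted alternatives. The first (refining to $\nabla R\equiv0$) would require verifying that $A$ satisfies the algebraic identities characterising the curvature of an affine symmetric space, which you do not do; the second (``computing the quadratic correction \dots\ and checking'') is not carried out at all. As written, neither option is an argument. The paper instead writes down an explicit torsion-free connection on $\mathbb{R}^m$,
\[
\Gamma_{mm}{}^m=2,\quad \Gamma_{im}{}^i=\Gamma_{mi}{}^i=\Gamma_{ii}{}^m=1\ (i<m),\quad \Gamma_{11}{}^1=-\Gamma_{22}{}^2=\varepsilon(x_1+x_2),
\]
and checks by direct computation that its curvature tensor equals $A$ \emph{identically on all of $\mathbb{R}^m$}, not merely at one point: the $\varepsilon$-dependent Christoffel symbols are linear in the coordinates, their derivatives are constant, and the quadratic $\Gamma\Gamma$ cross-terms involving $\varepsilon$ cancel. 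Since the curvature equals $A$ everywhere, Part~(1) then gives the projective Osserman condition at every point. That explicit global realisation is the missing ingredient in your treatment.
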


\begin{remark}\rm
The Ricci tensor of the model in Theorem~\ref{thm-1.11} is given by
$$\rho(e_i,e_j)=\left\{\begin{array}{rl}
\varepsilon&\text{ if }i=1,\ j=2\\
-\varepsilon&\text{ if }i=2,\ j=1\\
m-1&\text{ if }i=j\\
0&\text{ otherwise }\end{array}\right\}\,.$$
If $\varepsilon\ne0$, then $\rho_s$ is not symmetric and $A$ is not a Riemannian
algebraic curvature operator and, in particular, is not the curvature operator of constant sectional
curvature $+1$.
\end{remark}

The tensor of Theorem~\ref{thm-1.11} is a perturbation of the curvature tensor of constant sectional
curvature $+1$. In Section~\ref{sect-7}, we present two algebraic examples which are perturbations
of the Fubini-Study metric on complex projective space and on quaternionic projective space, respectively,
and which are affine projective Osserman models.

\section{The proof of Lemma~\ref{lem-1.5}}\label{sect-2}

Let $(M,\nabla)$ be an affine manifold and let $\mathcal{O}$ be an open connected subset of $TM$.
Assume $\Spec\{\mathcal{J}(x)\}=s(x,y)\Spec\{\mathcal{J}(y)\}\ne\{0\}$ for all $x,y\in\mathcal{O}$.
 Let $\sigma(t)$ be a path in $\mathcal{O}$. Since the number of eigenvalues of 
$\Spec\{\mathcal{J}(\sigma(t))\}$ is independent of $t$, eigenvalues
do not coalesce or bifurcate and consequently the eigenvalue multiplicities are constant as well along
$\sigma$. Thus
\begin{equation}\label{eqn-2.a}
\Tr\{\mathcal{J}(t)^k\}=s(\sigma(0),\sigma(t))^k\Tr\{\mathcal{J}(0)^k\}\text{ for any }k\,.
\end{equation}
Since $\mathcal{J}(\sigma(0))$ is not nilpotent, 
$\Tr\{\mathcal{J}(\sigma(0))^k\}\ne0$ for some $k$.
Fix such a $k$. Since $\mathcal{O}$ is connected, Equation~(\ref{eqn-2.a}) implies that
$\Tr\{\mathcal{J}(x)^k\}\ne0$ for any $x\in\mathcal{O}$ and that $s(\sigma(0),\sigma(t))^k$
is smooth.  If $k$ is odd, 
since $s(\sigma(0),\sigma(0))=1$ and $s(\sigma(0),\sigma(t))\ne0$, we have 
$s(\sigma(0),\sigma(t))>0$. Since the endpoints were arbitrary, $s(x,y)>0$ for all $(x,y)$ and the Lemma follows.

On the other hand, if
$\Tr\{\mathcal{J}(\sigma(0))^k\}=0$ for all odd $k$,
then $\Spec\{\mathcal{J}(\sigma(0))\}$ is symmetric about the origin
and we may assume $s(\sigma(0),\sigma(t))$ is positive.
Again, we can take the $k^{\operatorname{th}}$ root
to establish Lemma~\ref{lem-1.5}.\hfill\qed

\section{The proof of Theorem~\ref{thm-1.7}}\label{sect-3}
Let $p>0$ and let $q>0$ be given.
Let $(S^q,g_q)$ denote the sphere in $\mathbb{R}^{q+1}$
with the standard metric of constant sectional curvature $+1$. Let $(\mathbb{R}^p,g_p)$
denote $\mathbb{R}^p$ with a flat negative definite metric. Let
$M=(\mathbb{R}^p\times S^q,g_p\oplus g_q)$; this metric has signature $(p,q)$.
If $\xi=(\xi_p,\xi_q)\in TM$, then $\mathcal{J}(\xi)=0\oplus\mathcal{J}(\xi_q)$. If $\xi$ is spacelike,
then $\xi_q\ne0$ and $\Spec\{\mathcal{J}(\xi)\}=\Spec\{\mathcal{J}(\xi_q)\}=\{0,|\xi_q|^2\}$
 and thus $(M,g)$ is a spacelike projective Osserman manifold; $0$ is an eigenvalue
 of multiplicity $p$. On the other hand, if $\xi_q=0$
 and $\xi_p\ne0$, then $\xi$ is timelike and $\Spec\{\mathcal{J}(\xi)\}=\{0\}$ so $(M,g)$
 is not a timelike projective Osserman manifold. This proves the first assertion of Theorem~\ref{thm-1.7}; the
 second follows similarly.\hfill\qed
 
\section{The proof of Theorem~\ref{thm-1.8}}\label{sect-4}
Let $\sigma$ be the canonical projection from $T^*M$ to $M$. 
Let $\xi\in T(T^*M)$ and let $a=\sigma_*\xi\in TM$. Relative to
the canonical frame $(\partial_{x_1},...,\partial_{x_m},\partial_{y^1},...,\partial_{y^m})$ for $T(T^*M)$,
one has  (see Garc\'{\i}a-R\'{\i}o et al \cite{GKVV}) that:
$$
\mathcal{J}_{g_{\nabla,\Phi}}(\xi) = \left(
\begin{array}{ll}
\mathcal{J}_\nabla({a}) & 0\\\ast & {}^t\mathcal{J}_\nabla({a})
\end{array}\right)
$$
where $\ast$ is some linear map from 
$\operatorname{Span}\{\partial_{x_i}\}$ to $\Span\{\partial_{y^k}\}$. Consequently
$$
\Spec\{\mathcal{J}_{g_{\nabla,\Phi}}(\xi)\}=\Spec\{\mathcal{J}_\nabla(a)\}\,.
$$
If $\xi_\pm\in S^\pm(T^*M,g_{\nabla,\Phi})$,
then $a:=\sigma_*\xi_\pm\ne0$. The implication (1) $\Rightarrow$ (2) and
the implication (1) $\Rightarrow$ (3)
of Theorem~\ref{thm-1.8} now follow. Conversely, suppose that Assertion~(2)
holds or that Assertion~(3) holds.
Let $a\ne0$. Choose
$\xi\in S^\pm(T^*M,g_{\nabla,\Phi})$ so that $\sigma_*(\xi)=ta$ for some $t\ne0$; 
The implications (2) $\Rightarrow$ (1) and (3) $\Rightarrow$ (1) now follow.
\hfill\qed

\section{The proof of Theorem~\ref{thm-1.10}}\label{sect-5}

Let $(V,A)$ be an affine projective Osserman curvature model.
Fix a basepoint $0\ne x\in V$ and let
$\Spec\{\mathcal{J}(x)\}=\{0,\lambda_1,...\}$; by hypothesis $\Spec\{\mathcal{J}(x)\}\ne\{0\}$.
If $0\ne y\in V$, $\Spec\{\mathcal{J}(y)\}=\{0,s(y,x)\lambda_1,...\}$.
Let
$$V_1(y)=\left\{\begin{array}{l}
\ker\{(\mathcal{J}(y)-s(y,x)\lambda_1)^m\}\text{ if }\lambda_1\in\mathbb{R}\\
\ker\{(\mathcal{J}(y)-s(y,x)\lambda_1)^m(\mathcal{J}(y)-s(y,x)\bar\lambda_1)^m\}\text{ otherwise}
\end{array}\right\}$$
be the generalized eigenspace corresponding to $\lambda_1$ if $\lambda_1$ is real and to 
$\{\lambda_1,\bar\lambda_1\}$
otherwise. 
As noted previously, the eigenvalue multiplicities are constant. Thus
these generalized eigenspaces have constant dimension and vary smoothly with $y$.
Put an auxiliary inner product $\langle\cdot,\cdot\rangle$ on $V$ and let
$S^{m-1}$ be the unit sphere of $(V,\langle\cdot,\cdot\rangle)$. Let $y\in S^{m-1}$.
Since $\mathcal{J}(y)y=0$,
$\mathcal{J}(y)$ induces an endomorphism of the quotient space
$V/y\cdot\mathbb{R}$ which we may identify with
$T_yS^{m-1}$. Since $m-1$ is even, $S^{m-1}$ has no non-trivial sub-bundles. Since $\{0\}\ne V_1$
is a sub-bundle of $TS^{m-1}$, we conclude $V_1=TS^{m-1}$ for $y\in S^{m-1}$.
This implies that $0$ is an eigenvalue
of multiplicity $1$ and that
$$\Spec\{\mathcal{J}(y)\}=\left\{\begin{array}{l}
\{0,s(y,x)\lambda_1\}\text{ if }\lambda\in\mathbb{R}\\
\{0,s(y,x)\lambda_1,s(y,x)\bar\lambda_1\}\text{ otherwise}\end{array}\right\}\,.$$
This completes the proof if $\lambda_1$ is real. Thus we suppose $\lambda_1$ is complex and
argue for a contradiction. We complexify and decompose
$$T_y(S^{m-1})\otimes_{\mathbb{R}}\mathbb{C}
=W_{s(y,x)\lambda_1}\oplus W_{s(y,x)\bar\lambda_1}$$
into the generalized eigenbundles corresponding to $\lambda$ and $\bar\lambda$ where
$$
W_\mu(y):=\{\xi\in T_yS^{m-1}\otimes_{\mathbb{R}}\mathbb{C}:(\mathcal{J}(y)-\mu)^m\xi=0\}\,.
$$
Since 
$\mathcal{J}(-y)=\mathcal{J}(y)$, we obtain a corresponding decomposition of the tangent bundle
of projective space 
$$T(\mathbb{RP}^{m-1})\otimes_{\mathbb{R}}\mathbb{C}=W_\lambda\oplus W_{\bar\lambda}\,.$$
Since $W_\lambda=\bar W_{\bar\lambda}$, the first Chern class vanishes:
$$0=c_1(T(\mathbb{RP}^{m-1})\otimes_{\mathbb{R}}\mathbb{C})
\in H^2(\mathbb{RP}^{m-1};\mathbb{Z}_2)=\mathbb{Z}_2\,.$$
On the other hand, $\mathbb{RP}^{m-1}$ is not orientable since $m-1$ is even. Thus
$w_1(T(\mathbb{RP}^{m-1}))$ generates $H^1(\mathbb{RP}^{m-1};\mathbb{Z}_2)=\mathbb{Z}_2$.
Since the generator of the first cohomology group
$H^1(\mathbb{RP}^{m-1};\mathbb{Z}_2)$ squares to the generator of the second cohomology group
$H^2(\mathbb{RP}^{m-1};\mathbb{Z}_2)$, this implies
$$0\ne w_1^2(T(\mathbb{RP}^{m-1}))\in H^2(\mathbb{RP}^{m-1};\mathbb{Z}_2)=\mathbb{Z}_2\,.$$
This is a contradiction since
$$
w_1^2(T(\mathbb{RP}^{m-1}))=c_1(T(\mathbb{RP}^{m-1})\otimes_{\mathbb{R}}\mathbb{C})\,.
$$
This contradiction completes the proof.\hfill\qed

\section{The proof of Theorem~\ref{thm-1.11}}\label{sect-6}
If $m=2$, then Theorem~\ref{thm-1.11} follows from Theorem~\ref{thm-1.9}~(2) so we shall assume that $m\ge3$.
We have defined $\mathfrak{M}_\varepsilon:=(\mathbb{R}^m,A)$, where the non-zero components of $A$
are determined by:
$$A_{ijj}{}^i=1\text{ for }1\le i\ne j\le m\quad\text{ and }\quad A_{122}{}^2=A_{121}{}^1=-\varepsilon.$$
If $(\cdot,\cdot)$ is the usual Euclidean inner product on $\mathbb{R}^m$, then $\rho_s=(m-1)(\cdot,\cdot)$.
We let $G:=SO(2)\times SO(m-2)$ act on $\mathbb{R}^m$. We lower indices and regard
$A\in\otimes^4V^*$:
$$
A=-\varepsilon(e^1\wedge e^2)\otimes(e^1\otimes e^1+e^2\otimes e^2)
+\sum_{i<j}(e^i\wedge e^j)\otimes(e^j\wedge e^i)\,.
$$
Consequently $A$ is invariant under the action of $G$ so $\Spec\{\mathcal{J}(x)\}=\Spec\{\mathcal{J}(gx)\}$
for all $g\in G$.
Let $x=a_1e_1+...+a_me_m$ belong to $S^{m-1}$.
We may use the action of $SO(2)$ to ensure that $a_2=0$ and we may use the
the action of $SO(m-2)$ to ensure that
$a_i=0$ for $i>3$ in examining $\Spec\{\mathcal{J}(x)\}$. Thus we may assume that $x=\cos\theta e_1+\sin\theta e_3$ so
\begin{eqnarray*}
&&\mathcal{J}(x)e_i=e_i\text{ for }i\ge 4,\\
&&\mathcal{J}(x)(\cos\theta e_1+\sin\theta e_3)=0,\\
&&\mathcal{J}(x)(-\sin\theta e_1+\cos\theta e_3)=-\sin\theta e_1+\cos\theta e_3\,.
\end{eqnarray*}
Thus $0$ is an eigenvalue of multiplicity at least $1$ and $+1$ is a eigenvalue of multiplicity at least $m-2$.
Since $\Tr\{\mathcal{J}(x)\}=\rho(x,x)=(m-1)$, we conclude that $+1$ is an eigenvalue of multiplicity $m-1$.
Consequently, $\mathfrak{M}_\varepsilon$ is an affine projective Osserman curvature model
 for any $\varepsilon$.

Define a torsion free
connection $\nabla$ on $\mathbb{R}^m$ by setting:
$$
\Gamma_{mm}{}^m=2,\ \Gamma_{im}{}^i=\Gamma_{mi}{}^i=\Gamma_{ii}{}^m=1\text{ for }i<m;
\ \Gamma_{11}{}^1=-\Gamma_{22}{}^2=\varepsilon(x_1+x_2)\,.
$$
We have
$R_{ijk}{}^l=\partial_{x_i}\Gamma_{jk}{}^l-\partial_j\Gamma_{ik}{}^l+\Gamma_{in}{}^l\Gamma_{jk}{}^n-
\Gamma_{jn}{}^l\Gamma_{ik}{}^n$.
There are no terms in $\varepsilon^2$ and the only terms in $\varepsilon$ which are quadratic
in the Christoffel symbols are
\begin{eqnarray*}
0&=&\Gamma_{m1}{}^1\Gamma_{11}{}^1-\Gamma_{11}{}^1\Gamma_{m1}{}^1=0,\\
0&=&\Gamma_{m2}{}^2\Gamma_{22}{}^2-\Gamma_{22}{}^2\Gamma_{m2}{}^2=0\,.
\end{eqnarray*}
Consequently, the quadratic terms give rise to:
\begin{eqnarray*}
&&R_{imm}{}^i=\Gamma_{im}{}^i\Gamma_{mm}{}^m-\Gamma_{mi}{}^i\Gamma_{im}{}^i
=2-1\text{ for }i<m,\\
&&R_{mii}{}^m=\Gamma_{mm}{}^m\Gamma_{ii}{}^m-\Gamma_{ii}{}^m\Gamma_{mi}{}^i
=2-1\text{ for }i<m,\\
&&R_{ijj}{}^i=\Gamma_{im}{}^i\Gamma_{jj}{}^m=1\text{ for }i\ne j<m\,.
\end{eqnarray*}
We complete the proof by examining the terms involving the derivatives of $\Gamma$ and verifying:
\medbreak\hfill $R_{122}{}^2=\partial_{x_1}\Gamma_{22}{}^2=-\varepsilon$ and 
$R_{211}{}^1=\partial_{x_2}\Gamma_{11}{}^1=\varepsilon$.\hfill\qed

\begin{remark}\label{rmk-6.1}\rm 
Suppose $\varepsilon\ne0$. If $\theta=\frac\pi2$, then $x=e_3$ and
$\mathcal{J}(x)$ is diagonal. If $\theta=\frac\pi4$, then
$x=\frac1{\sqrt2}(e_1+e_3)$ and:
\begin{eqnarray*}
&&\mathcal{J}(x)(e_1+e_3)=0,\quad
\mathcal{J}(x)(e_1-e_3)=e_1-e_3,\quad
\mathcal{J}(x)e_2=\textstyle\frac12\varepsilon e_1+e_2,\\
&&\mathcal{J}(x)\{e_2+\textstyle\frac14\varepsilon(e_1+e_3)\}=\frac12\varepsilon e_1+e_2
=e_2+\textstyle\frac14\varepsilon(e_1+e_3)+\textstyle\frac14\varepsilon(e_1-e_3)\,.
\end{eqnarray*}
Thus the space $\Span\{u_1:=e_2+\textstyle\frac14\varepsilon(e_1+e_3),v_2=\frac14\varepsilon(e_1-e_3)\}$
is invariant under the action of $\mathcal{J}(x)$ and we have $\mathcal{J}(x)v_2=v_2$ and
$\mathcal{J}(x)v_1=v_1+v_2$. Consequently, we have non-trivial Jordan normal form in this instance.
\end{remark}

\begin{remark}\rm
Suppose that $\varepsilon=0$. 
Since the Christoffel symbols are constant, the group of translations acts on 
transitively on $\mathcal{M}$ by affine isomorphisms; thus $\mathcal{M}$ is affine
homogeneous\index{homogeneous}. However, if we set $\sigma(t)=(0,...,0,x(t))$, then the
geodesic equation\index{geodesic equation}
becomes $\ddot x+\dot x\dot x=0$
which blows up in finite time for suitable initial conditions. Thus
$(\mathbb{R}^m,\nabla)$ is geodesically incomplete\index{geodesically incomplete}. Finally, 
we compute:
\begin{eqnarray*}
&&\nabla R(\partial_m,\partial_1,\partial_1;\partial_1)\\
&=&\nabla_{\partial_m}R(\partial_m,\partial_1)\partial_1
-R(\nabla_{\partial_m}\partial_m,\partial_1)\partial_1-R(\partial_m,\nabla_{\partial_m}\partial_1)\partial_1
-R(\partial_m,\partial_1)\nabla_{\partial_m}\partial_1\\
&=&(2-2-2)\partial_m\ne0\,.
\end{eqnarray*}
Consequently, $\nabla\mathcal{R}\ne0$. Thus these manifolds
are not locally symmetric\index{symmetric}. This shows that the affine manifolds $\mathcal{M}_0$
are not affinely equivalent to the standard affine structure on the sphere $S^m$.

If $\varepsilon\ne0$, then there is a translation group of rank $m-1$ which acts on $(M,\nabla)$
preserving the structures. Furthermore, this manifold is affine curvature homogeneous. However,
we have additional entries in $\nabla R$:
$$
\nabla R(\partial_2,\partial_1,\partial_1;\partial_1)=-2\Gamma_{11}{}^1\partial_2,\text{ and }
\nabla R(\partial_1,\partial_2,\partial_2;\partial_2)=-2\Gamma_{22}{}^2\partial_1\,.
$$
Since $\Gamma_{11}{}^1$ and $\Gamma_{22}{}^2$ vanishe if and only if $x_1+x_2=0$,
$(M,\nabla)$ is not 1-affine curvature homogeneous and has
affine cohomogeneity 1.\end{remark}

\section{Two algebraic examples}\label{sect-7}

In Section~\ref{sect-6}, we considered a model based on the tensor of constant sectional curvature 1. 
In this section, we examine examples which are related to the curvature operators of complex and
projective space. These examples have
non-symmetric Ricci tensors and non-trivial Jordan
normal form. We do not know if any of the examples in this section can be realized geometrically.

\subsection{A complex example}
\rm Let $m=2\bar m$ be even, let $(\cdot,\cdot)$ be the usual positive definite inner product on $\mathbb{R}^{m}$
for $m$ even, and
let $J$ be a Hermitian complex structure; this means that
$$J^*(\cdot,\cdot)=(\cdot,\cdot)\text{ and }J^2=-\operatorname{Id}\,.$$
We can choose an orthonormal basis $\{e_1,...,e_m\}$ for $\mathbb{R}^m$ so that if $1\le j\le\bar m$, then:
$$Je_i=\left\{\begin{array}{l}e_{2j}\text{ if }i=2j-1\\
-e_{2j-1}\text{ if }i=2j\end{array}\right\}\,.
$$
Define algebraic affine curvature operators by setting:
\begin{eqnarray*}
&&A_0(x,y)z:=(y,z)x-(x,z)y,\\
&&A_J(x,y)z:=(Jy,z)Jx-(Jx,z)Jy-2(Jx,y)Jz,\\
&&\mathcal{E}(e_1,e_2)e_1=-e_1\text{ and }\mathcal{E}(e_2,e_1)e_2=e_2\,.
\end{eqnarray*}
The tensor $A_0+A_J$ is the curvature operator of the Fubini-Study metric
on complex projective space $\mathbb{CP}^{\bar m}$. If $(x,x)=1$, then
$$
\mathcal{J}_{\lambda_0A_0+\lambda_1A_J}(x)\cdot{ y}=\left\{\begin{array}{rll}
0&\text{if}&y=x\\
(\lambda_0+3\lambda_1)y&\text{if}&y=Jx\\
\lambda_0y&\text{if}&y\perp\{x,x\}\end{array}\right\}\,.
$$
Thus $\lambda_0A_0+\lambda_1A_J$ is an affine projective Osserman curvature model.

\begin{lemma} 
Let $\mathfrak{M}_\varepsilon:=(\mathbb{R}^m,\lambda_0A_0+\lambda_1A_J+\varepsilon\mathcal{E})$.
The eigenvalues of $\mathcal{J}(x)$ for $(x,x)=1$ are $\{0,\lambda_0+3\lambda_1,\lambda_0,...,\lambda_0\}$
where each eigenvalue is repeated according to multiplicity. Thus $\mathfrak{M}_\varepsilon$ is
an affine projective Osserman curvature model.
\end{lemma}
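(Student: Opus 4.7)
The plan is to show that the perturbation by $\varepsilon\mathcal{E}$ does not alter the characteristic polynomial of the Jacobi operator, so that its eigenvalues agree with those computed for $\lambda_0 A_0+\lambda_1 A_J$ just above the Lemma. Let $\mathcal{J}_0(x)$ denote the Jacobi operator of $\lambda_0 A_0+\lambda_1 A_J$, so that $\mathcal{J}(x) = \mathcal{J}_0(x) + \varepsilon\mathcal{J}_\mathcal{E}(x)$. The key observation is that a direct unraveling of the four non-zero components of $\mathcal{E}$ (using the $\mathbb{Z}_2$ antisymmetry to fill in $\mathcal{E}(e_2,e_1)e_1$ and $\mathcal{E}(e_1,e_2)e_2$) shows
\[
\mathcal{J}_\mathcal{E}(x)y = (x_1 y_2 - x_2 y_1)\,\pi(x) = -(y,J\pi(x))\,\pi(x),
\]
where $\pi(x):=x_1 e_1 + x_2 e_2$ and, by our choice of $J$, $J\pi(x) = -x_2 e_1 + x_1 e_2$. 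Hence $\mathcal{J}_\mathcal{E}(x) = -\pi(x)\otimes J\pi(x)$ is rank one (and in fact nilpotent, since $(J\pi(x),\pi(x))=0$).

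Next I would apply the matrix determinant lemma: for $\lambda$ outside $\Spec\{\mathcal{J}_0(x)\}$,
\[
\det\bigl(\mathcal{J}(x)-\lambda I\bigr)
= \det\bigl(\mathcal{J}_0(x)-\lambda I\bigr)\cdot\Bigl(1-\varepsilon\bigl(J\pi(x),(\mathcal{J}_0(x)-\lambda I)^{-1}\pi(x)\bigr)\Bigr).
\]
So it suffices to prove that the resolvent pairing $(J\pi(x),(\mathcal{J}_0(x)-\lambda I)^{-1}\pi(x))$ vanishes identically in $\lambda$.

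For this, I would use the eigenspace decomposition of $\mathcal{J}_0(x)$ established just before the Lemma: on $(x,x)=1$, the eigenspaces are $E_0=\Span\{x\}$, $E_{\lambda_0+3\lambda_1}=\Span\{Jx\}$, and $E_{\lambda_0}=\{x,Jx\}^\perp$. Write $\pi(x) = \alpha x + \beta Jx + \gamma$ with $\gamma\in E_{\lambda_0}$. Since $Jx$ equals $-x_2 e_1 + x_1 e_2$ plus terms in $\Span\{e_3,\ldots,e_m\}$, a direct calculation gives $(\pi(x),Jx)=0$, so $\beta=0$ and $\pi(x)=|\pi(x)|^2 x + \gamma$. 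Applying $(\mathcal{J}_0(x)-\lambda I)^{-1}$ term by term, I then need only check $(J\pi(x),x)=0$ (which is immediate, as $J\pi(x)\in\Span\{e_1,e_2\}$ pairs only with $\pi(x)$, and $(J\pi(x),\pi(x))=0$) and $(J\pi(x),\gamma) = (J\pi(x),\pi(x)) - |\pi(x)|^2(J\pi(x),x) = 0$. Both pairings vanish, so the resolvent pairing is identically zero, and $\mathcal{J}(x)$ has the same characteristic polynomial (hence same spectrum with multiplicities) as $\mathcal{J}_0(x)$, which is $\{0,\lambda_0+3\lambda_1,\lambda_0,\ldots,\lambda_0\}$.

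The affine projective Osserman property then follows at once: for any $x\ne0$, rescaling to the unit sphere and using $\mathcal{J}(tx)=t^2\mathcal{J}(x)$ shows that $\Spec\{\mathcal{J}(x)\}$ is always a positive multiple of $\{0,\lambda_0+3\lambda_1,\lambda_0,\ldots,\lambda_0\}$, which is non-zero for generic $(\lambda_0,\lambda_1)$. The main obstacle is the orthogonality input in step four: the identity $(\pi(x),Jx)=0$ is the non-obvious ingredient, but it follows cleanly from the $J$-invariance of $\Span\{e_1,e_2\}$ combined with $(Ju,u)=0$, so no genuine difficulty remains.
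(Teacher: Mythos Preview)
Your argument is correct and takes a genuinely different route from the paper's. The paper exploits the invariance of $A_0$, $A_J$, and $\mathcal{E}$ under the group $G=U(1)\times U(\bar m-1)$ to reduce to the special unit vector $x_1=\cos\theta\, e_1+\sin\theta\, e_3$, and then explicitly computes $\mathcal{J}(x_1)$ on the adapted basis $\{x_1,\,-\sin\theta\, e_1+\cos\theta\, e_3,\,Jx_1,\,Jx_2\}$ together with $e_5,\dots,e_m$; the resulting matrix is visibly upper triangular with diagonal $(0,\lambda_0,\lambda_0+3\lambda_1,\lambda_0,\dots,\lambda_0)$, and the spectrum is read off directly. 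Your approach instead isolates the perturbation $\mathcal{J}_\mathcal{E}(x)$ as a rank-one map and uses the matrix determinant lemma together with the orthogonality relations $(\pi(x),Jx)=(J\pi(x),x)=(J\pi(x),\pi(x))=0$ to show that the characteristic polynomial is unaffected by $\varepsilon$. (One small slip: since $(y,J\pi(x))=x_1y_2-x_2y_1$, the correct identity is $\mathcal{J}_\mathcal{E}(x)y=+(y,J\pi(x))\,\pi(x)$; the extra minus sign is harmless because the resolvent pairing vanishes anyway.) Your route avoids the symmetry reduction entirely and makes transparent \emph{why} the spectrum is preserved under the $\varepsilon$-perturbation; the paper's route, by contrast, displays the triangular matrix structure explicitly, which feeds directly into the Jordan-form discussion in Remark~\ref{rmk-7.2}.
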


\begin{proof} The tensor $A_0$ is invariant under the action of the full orthogonal group $O(m)$, 
the tensor $A_J$ is invariant under the action of the unitary group $U(\bar m)$,
and the tensor $\mathcal{E}$ is invariant under
the action of the group
$$G:=U(1)\times U(\bar m-1)\subset U(\bar m)\subset O(m)\,.$$
We suppose $x_1\in\mathbb{R}^{m}$
satisfies $(x_1,x_1)=1$. We use the action of $G$ to assume 
$x_1=\cos\theta e_1+\sin\theta e_3$ when studying $\mathcal{J}(x)$.
Then $\mathcal{J}(x)=\operatorname{Id}$ on $\Span\{e_i\}_{i\ge5}$
so this space plays no role and we may assume $m=4$. Let 
$$\begin{array}{ll}
   x_1:=\cos\theta e_1+\sin\theta e_3,&x_2:=-\sin\theta e_1+\cos\theta e_3,\\
   x_3:=Jx_1=\cos\theta e_2+\sin\theta e_4,&
   x_4:=Jx_2=-\sin\theta e_2+\cos\theta e_4.\end{array}$$
Let $\star$ be a coefficient which we do not need to specify. We then have
\begin{eqnarray*}
&&\mathcal{J}(x_1)x_1=0,\\
&&\mathcal{J}(x_1)x_2=\lambda_0x_2,\\
&&\mathcal{J}(x_1)x_3=(\lambda_0+3\lambda_1)x_3+\star e_1
=\star x_1+\star x_2+(\lambda_0+3\lambda_1)x_3,\\
&&\mathcal{J}(x_1)x_4=\lambda_0x_4+\star e_1=\star x_1+\star x_2+\lambda_0x_4\,.
\end{eqnarray*}
The matrix of $\mathcal{J}(x_1)$ on this 4-dimensional subspace is therefore given by
$$\mathcal{J}(x_1)=\left(\begin{array}{llll}
0&0&\star&\star\\
0&\lambda_0&\star&\star\\
0&0&\lambda_0+3\lambda_1&0\\
0&0&0&\lambda_0\end{array}\right)\,.$$
The Lemma now follows.\end{proof}

\begin{remark}\label{rmk-7.2}
\rm If we take $\theta=\frac\pi2$, then $x_1=e_3$ and $\mathcal{J}(x_1)$ is diagonal. If we take
$\theta=\frac\pi4$, then $x_1=(e_1+e_3)/\sqrt{2}$ and the same argument given in Remark~\ref{rmk-6.1}
shows $\mathcal{J}(x_1)$ has non-trivial Jordan normal form:
\begin{eqnarray*}
&&\mathcal{J}(x)(e_1+e_3)=0,\quad
\mathcal{J}(x)(e_1-e_3)=e_1-e_3,\\
&&\mathcal{J}(x)(e_2-e_4)=\textstyle\frac12\varepsilon e_1+(e_2-e_4),\\
&&\mathcal{J}(x)\{e_2-e_4+\textstyle\frac14\varepsilon(e_1+e_3)\}=\frac12\varepsilon e_1+e_2-e_4\\
&&\qquad=e_2-e_4+\textstyle\frac14\varepsilon(e_1+e_3)+\textstyle\frac14\varepsilon(e_1-e_3)\,.
\end{eqnarray*}
and, again, the Jordan normal form is non-trivial if $\varepsilon\ne0$.
\end{remark}

\subsection{A quaternion example}
Let $m=4k$ and let $\{J_1,J_2,J_3\}$ give $\mathbb{R}^{4k}$ an orthogonal quaternion structure, i.e.
$$
(J_ix,J_ix)=(x,x),\quad
J_iJ_j+J_jJ_i=-2\delta_{ij}\operatorname{Id},\text{ and }J_1J_2=J_3\,.
$$
Let
$$\mathcal{E}:=-\left\{(e^1\wedge e^2)\otimes(e^1\otimes e^1+e^2\otimes e^2)
+(e^3\wedge e^4)(e^3\otimes e^3+e^4\otimes e^4)\right\}\,.$$

\begin{lemma} Let $\mathfrak{M}_\varepsilon:(\mathbb{R}^m,\lambda_0A_0+\lambda_1A_{J_1}+\lambda_2A_{J_2}+\lambda_3A_{J_3}+\varepsilon\mathcal{E})$.
The eigenvalues of $\mathcal{J}(x)$ for $(x,x)=1$ are $\{0,\lambda_0+3\lambda_1,\lambda_0+3\lambda_2,\lambda_0+3\lambda_3,\lambda_0,...,\lambda_0\}$
where each eigenvalue is repeated according to multiplicity. Thus
$\mathfrak{M}_\varepsilon$ is an affine projective Osserman curvature model.
\end{lemma}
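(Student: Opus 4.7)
I would model the proof on the preceding complex case. The first step is to identify the joint symmetry group of the constituent tensors: the curvature operators $A_0$ and $A_{J_i}$ ($i=1,2,3$) are jointly invariant under the quaternionic unitary group $Sp(k)$ acting on $\mathbb{R}^{4k}=\mathbb{H}^k$, while the perturbation $\mathcal{E}$ is supported on the first quaternionic line $\operatorname{Span}\{e_1,e_2,e_3,e_4\}$ and is preserved by the subgroup $G:=U(1)\cdot Sp(k-1)$, where $U(1)\subset Sp(1)$ is right multiplication by $e^{i\theta}$ (which rotates the $(e_1,e_2)$-plane by $\theta$ and the $(e_3,e_4)$-plane by $-\theta$ on every quaternionic line, thereby preserving both pieces of $\mathcal{E}$) and $Sp(k-1)$ is the stabilizer of the first quaternionic line. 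I would then use the $G$-action to bring an arbitrary unit vector $x\in S^{m-1}$ into a normal form $x=\cos\theta\,e_1+\sin\theta\,e_5$, exactly as in the complex case.

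The next step is a direct matrix calculation. Supplement $\{x,J_1x,J_2x,J_3x\}$ (an orthonormal basis of the quaternionic line $W$ through $x$) by $u_1:=\sin\theta\,e_1-\cos\theta\,e_5$ and $u_{i+1}:=J_iu_1$ for $i=1,2,3$; together with $e_9,\ldots,e_m$ this gives an orthonormal basis of $\mathbb{R}^m$ in which $\{u_1,u_2,u_3,u_4,e_9,\ldots,e_m\}$ spans $W^\perp$. Using the identities $\mathcal{J}_{A_0}(x)y=y-(y,x)x$ and $\mathcal{J}_{A_{J_i}}(x)y=3(y,J_ix)J_ix$ valid for $|x|=1$, the unperturbed piece $\mathcal{J}_0(x):=\mathcal{J}_{\lambda_0A_0+\sum_i\lambda_iA_{J_i}}(x)$ is diagonal in this basis with diagonal entries $0$ on $x$, $\lambda_0+3\lambda_i$ on $J_ix$, and $\lambda_0$ on all of $W^\perp$.

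Plugging the normal form into the explicit formula $\mathcal{E}(y,x)x=-(x^1y^2-x^2y^1)(x^1e_1+x^2e_2)-(x^3y^4-x^4y^3)(x^3e_3+x^4e_4)$ shows that $\mathcal{J}_\mathcal{E}(x)$ is nilpotent of index two and has image contained in $\operatorname{Span}\{e_1\}=\operatorname{Span}\{x,u_1\}$; moreover it kills every basis vector except $J_1x$ and $u_2=J_1u_1$, which it sends into $\operatorname{Span}\{x,u_1\}$. Reorder the basis as $x,u_1,J_2x,J_3x,u_3,u_4,e_9,\ldots,e_m,J_1x,u_2$: then the only nonzero off-diagonal entries appear in the last two columns and the first two rows, so the matrix of $\mathcal{J}(x)$ is upper triangular. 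Reading off the diagonal yields the characteristic polynomial
\[
(-\mu)(\lambda_0+3\lambda_1-\mu)(\lambda_0+3\lambda_2-\mu)(\lambda_0+3\lambda_3-\mu)(\lambda_0-\mu)^{m-4},
\]
from which the asserted spectrum with the stated multiplicities follows. The affine projective Osserman conclusion is then immediate from Definition~\ref{defn-1.4} after accounting for the scaling $\mathcal{J}(\lambda v)=\lambda^2\mathcal{J}(v)$.

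The main obstacle is the first step, the identification of the symmetry group $G$ and the reduction of $x$ to the given normal form: the perturbation $\mathcal{E}$ breaks the symmetry of the $A_{J_i}$ down from $Sp(1)$ (which acts transitively on the unit sphere in each quaternionic line) to only the $U(1)$ described above, so the claim that $G\cdot x$ always contains a representative $\cos\theta\,e_1+\sin\theta\,e_5$ is more delicate than the corresponding reduction in the complex case. Once the normal form is in hand, however, the matrix bookkeeping is routine because $\mathcal{J}_\mathcal{E}(x)$ has such a restricted action.
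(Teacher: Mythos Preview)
Your overall strategy---symmetry reduction to a normal form, then reading the spectrum from an upper-triangular matrix---is exactly the paper's. The difference is the symmetry group: the paper asserts that the full tensor is invariant under $\operatorname{Sp}(1)\times\operatorname{Sp}(k-1)$ (with $\operatorname{Sp}(1)$ acting on the first quaternionic line by right multiplication), which makes the reduction to $x=\cos\theta\,e_1^1+\sin\theta\,e_1^2$ immediate; you work instead with the smaller group $G=U(1)\times\operatorname{Sp}(k-1)$ and flag the reduction as ``the main obstacle.''

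Your caution is well placed, and the obstacle is not merely ``more delicate'' but genuinely fails for your $G$: the $G$-orbit of $e_3$, for instance, never meets the family $\{\cos\theta\,e_1+\sin\theta\,e_5\}$, since right multiplication by $e^{i\theta}$ only rotates $e_3$ within $\operatorname{Span}\{e_3,e_4\}$ and $\operatorname{Sp}(k-1)$ fixes it. More to the point, the paper's larger invariance claim is itself incorrect. Right multiplication by $e^{j\theta}$ sends $e_1\mapsto c\,e_1+s\,e_3$, $e_2\mapsto c\,e_2+s\,e_4$ (with $c=\cos\theta$, $s=\sin\theta$), and one computes $(R_{e^{j\theta}}^{\,*}\mathcal{E})(e_1,e_2,e_1,e_1)=-(c^4+s^4)\ne-1=\mathcal{E}(e_1,e_2,e_1,e_1)$ for generic $\theta$. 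The discrete elements $R_j,R_k$ do preserve $\mathcal{E}$ (they swap the two $2$-planes), but the one-parameter subgroups through them do not. So the reduction step is a gap in both your proposal and the paper's argument as written. The lemma itself is true---for example, for $x=(e_1+e_3)/\sqrt2$ one finds $\mathcal{J}_{\mathcal{E}}(x)J_1x=\tfrac12 x$ and $\mathcal{J}_{\mathcal{E}}(x)J_3x=\tfrac12 J_2x$, so the matrix in the basis $\{x,J_1x,J_2x,J_3x\}$ is still upper triangular with the correct diagonal---but a complete proof needs either a larger normal-form family (reducing only to $x\in\operatorname{Span}\{e_1,e_3,e_4,e_5\}$, say, and carrying the extra parameters through the computation) or a structural argument that $\mathcal{J}_{\mathcal{E}}(x)$ is nilpotent with image inside a $\mathcal{J}_0(x)$-invariant flag compatible with the desired triangularization.
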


\begin{proof} Let $\mathbb{H}:=\Span_{\mathbb{R}}\{1,i,j,k\}$ denote the quaternions.
We take an orthonormal basis $\{e_1^\nu,e_i^\nu,e_j^\nu,e_k^\nu\}$ for $\mathbb{R}^{4k}$ where
$1\le\nu\le k$ so that $e_i^\nu=J_1e_1^\nu$, $e_j^\nu=J_2e_1^\nu$, and $e_k^\nu=J_3e_1^\nu$.
This permits us to
identify $\mathbb{R}^m=\mathbb{H}^k$ with the quaternions where $\{J_1=i,J_2=j,J_3=k\}$ are
the quaternions acting from the left. Let $\operatorname{Sp}(k)$ be
the group of isometries of $\mathbb{R}^m$ which commute $\{J_1,J_2,J_3\}$; this is the set of $k\times k$
orthogonal quaternion matrices acting from the right. The affine algebraic curvature tensor in question
is invariant under the action of $\operatorname{Sp}(1)\times\operatorname{Sp}(k-1)$. 
Consequently, in considering $\Spec\{\mathcal{J}(x)\}$, it suffices to consider the special case
$x=\cos\theta e_1^1+\sin\theta e_1^2$. The remaining variables $e_*^\nu$ for $\nu\ge3$ play
no role and may be ignored. We compute:
\medbreak\qquad
$\mathcal{J}(x)(\cos\theta e_1^1+\sin\theta e_1^2)=0$,
\smallbreak\qquad
$\mathcal{J}(x)(-\sin\theta e_1^1+\cos\theta e_1^2)=\lambda_0(-\sin\theta e_1^1+\cos\theta e_1^2)$,
\smallbreak\qquad
$\mathcal{J}(x)(\cos\theta e_i^1+\sin\theta e_i^2)
             =(\lambda_0+3\lambda_1)(\cos\theta e_i^1+\sin\theta e_i^2)+\star e_1^i$,
\smallbreak\qquad
$\mathcal{J}(x)(-\sin\theta e_i^1+\cos\theta e_i^2)
=\lambda_0(-\sin\theta e_i^1+\cos\theta e_i^2)+\star e_1^i)$,
\smallbreak\qquad
$\mathcal{J}(x)(\cos\theta e_j^1+\sin\theta e_j^2)
=(\lambda_0+3\lambda_2)(\cos\theta e_j^1+\sin\theta e_j^2)$,
\smallbreak\qquad
$\mathcal{J}(x)(-\sin\theta e_j^1+\cos\theta e_j^1)
=\lambda_0(-\sin\theta e_j^2+\cos\theta e_j^2)$,\smallbreak\qquad
$\mathcal{J}(x)(\cos\theta e_k^1+\sin\theta e_k^2)
=(\lambda_0+3\lambda_1)(\cos\theta e_k^1+\sin\theta e_k^2)$,
\smallbreak\qquad
$\mathcal{J}(x)(-\sin\theta e_k^1+\cos\theta e_k^1)=\lambda_0(-\sin\theta e_k^2+\cos\theta e_k^2)$.
\medbreak\noindent The last 4 vectors play no role and the matrix of $\mathcal{J}(x)$ with respect to the first
4 vectors takes the form:
$$\mathcal{J}(x)=\left(\begin{array}{rrrr}
0&0&\star&\star\\
0&\lambda_0&\star&\star\\
0&0&\lambda_0+3\lambda_1&0\\
0&0&0&\lambda_0
\end{array}\right)\,.$$
The desired result now follows.\end{proof}

\subsection{Acknowledgments}
The research of the authors was partially supported by Project
MTM2009-07756 (Spain) and by Project 174012 (Srbija). The paper is dedicated to our friend and colleague
Franki Dillen; may he rest in peace.

\end{document}